\let\mathcal\mathscr
\numberwithin{equation}{section}
\newtheorem{theorem}{Theorem}[section]
\newtheorem{lemma}[theorem]{Lemma}
\theoremstyle{remark}
\newtheorem*{ack}{Acknowledgements}
\newtheorem*{example}{Example}
\theoremstyle{definition}
\newtheorem{definition}[theorem]{Definition}
\renewcommand{\d}{\mathrm{d}}
\renewcommand{\phi}{\varphi}
\renewcommand{\rho}{\varrho}
\newcommand{\0}{\mathbf{0}}
\newcommand{\1}{\mathbf{1}}
\newcommand{\PP}{\mathbb{P}}
\newcommand{\ZZ}{\mathbb{Z}}
\newcommand{\ZZp}{\mathbb{Z}_{\mathrm{prim}}}
\newcommand{\NN}{\mathbb{N}}
\newcommand{\QQ}{\mathbb{Q}}
\newcommand{\RR}{\mathbb{R}}
\renewcommand{\leq}{\leqslant}
\renewcommand{\geq}{\geqslant}
\newcommand{\x}{\mathbf{x}}
\newcommand{\y}{\mathbf{y}}
\renewcommand{\c}{\mathbf{c}}
\renewcommand{\u}{\mathbf{u}}
\newcommand{\z}{\mathbf{z}}
\renewcommand{\b}{\mathbf{b}}
\newcommand{\ve}{\varepsilon}
\newcommand{\bxi}{\boldsymbol{\xi}}
\DeclareMathOperator{\rank}{rank}
\DeclareMathOperator{\Span}{span}
\renewcommand{\t}{\mathbf{t}}
\begin{document}

\title{Free rational points on smooth hypersurfaces}

\author{Tim Browning}

\address{IST Austria\\
Am Campus 1\\
3400 Klosterneuburg\\
Austria}
\email{tdb@ist.ac.at}

\author{Will Sawin}
\address{Columbia University\\ 
Department of Mathematics\\
2990 Broadway\\ New York\\ NY 10027\\ USA}
\email{sawin@math.columbia.edu}

\subjclass[2010]{11P55 (11D45, 14G05)}

\begin{abstract}
Motivated by a recent question of Peyre, 
we  apply  the  Hardy--Littlewood circle method to count ``sufficiently free" rational points of bounded height on 
 arbitrary smooth projective 
hypersurfaces of   low degree that are defined over the rationals.
\end{abstract}

\date{\today}

\maketitle

\thispagestyle{empty}
\setcounter{tocdepth}{1}
\tableofcontents

\section{Introduction}

Let $V\subset \PP^{n-1}$ be a smooth hypersurface of degree $d\geq 3$, defined over the field of rational numbers. For $B\geq 1$, let $N_V(B)=\#\{x\in V(\QQ): H(x)\leq B\}$, where 
$H$ is the usual exponential height function on 
$\PP^{n-1}(\QQ)$.
Thanks to
the Hardy--Littlewood circle method
and  work of Birch \cite{birch}, it follows that 
there exists a constant $\delta>0$ such that 
\begin{equation}\label{eq:birch}
N_V(B)= c B^{n-d}+O_V(B^{n-d-\delta}),
\end{equation}
as $B\to \infty$, provided that $n>2^d(d-1)$.
Here $c=\frac{1}{n-d}\omega_H(V(\mathbf{A}_\QQ))$ and
$\omega_H$ is the Tamagawa measure on the space of adeles of $V$. The asymptotic formula \eqref{eq:birch} provided one of the earliest pieces of evidence for the conjecture of Manin 
\cite{FMT89}, and its refinement by  Peyre \cite{peyre-duke}, about the distribution of rational points on Fano varieties. 

The purpose of this paper is to address a very recent question of  Peyre \cite{peyre-freedom} about 
the distribution of ``sufficiently free'' rational points of bounded height on $V$. 
Peyre associates a measure of ``freeness'' $\ell(x)\in [0,1]$ to any $x\in V(\QQ)$ and advocates the idea of only counting those rational points which satisfy $\ell(x)\geq \ve_B$, where $\ve_B$ is a function of $B$ decreasing to zero sufficiently slowly.\footnote{A similar question was asked by Ellenberg and Venkatesh in a 2015 private communication with the first author.} 
(See \cite[Def.~6.11]{peyre-freedom} for a precise statement for arbitrary Fano varieties over arbitrary number fields.) Peyre's function $\ell(x)$ is defined in \eqref{eq:f-P} using 
Arakelov geometry and 
the  theory of slopes associated to the tangent bundle $\mathcal{T}_V$.  
Let 
\begin{equation}\label{eq:NV}
N_V^{\ve\text{-free}}(B)=\#\left\{x\in V(\QQ): \ell(x)\geq \ve, ~H(x)\leq B\right\}.
\end{equation}
In the setting of smooth hypersurfaces $V\subset \PP^{n-1}$ of low degree, 
Peyre predicts that for a suitable range of $\ve$, 
$N_V^{\ve\text{-free}}(B)$ should have the same asymptotic behaviour as the usual counting function 
$N_V(B)$, as $B\to \infty$. 
The following result 
confirms this for a  range of $\ve$ that is independent of $B$.

\begin{theorem}\label{t:P}
Let $d\geq 3$ and let $n>3(d-1)2^{d-1}$. Then there exists a constant $c_{d,n}\in (0,1)$ 
such that for any 
$$
0\leq \ve < c_{d,n}
$$
there exists a further  constant $\delta>0$
such that  
$$
N_V^{\ve\text{-free}}(B)=c
 B^{n-d} +O_{V,\ve}\left(B^{n-d-\delta} \right),
$$
where $c=\frac{1}{n-d}\omega_H(V(\mathbf{A}_\QQ))$ is the expected  leading constant.
\end{theorem}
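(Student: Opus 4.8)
Since $\ell$ takes values in $[0,1]$ one has
\begin{equation*}
N_V(B)-N_V^{\ve\text{-free}}(B)=\#\bigl\{x\in V(\QQ):\ell(x)<\ve,\ H(x)\le B\bigr\},
\end{equation*}
and, as $n>3(d-1)2^{d-1}>2^d(d-1)$, Birch's theorem \eqref{eq:birch} gives $N_V(B)=cB^{n-d}+O_V\bigl(B^{n-d-\delta_0}\bigr)$ for some $\delta_0>0$. So the plan is to produce a constant $c_{d,n}\in(0,1)$ such that, for every $\ve<c_{d,n}$, the non-free points are negligible:
\begin{equation*}
\#\bigl\{x\in V(\QQ):\ell(x)<\ve,\ H(x)\le B\bigr\}=O_{V,\ve}\bigl(B^{n-d-\delta}\bigr)
\end{equation*}
for some $\delta>0$; combined with Birch's estimate (and noting that $\ve=0$ is vacuous, as $\ell\ge 0$) this proves the theorem.

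Next I would convert $\ell(x)<\ve$ into a statement in the geometry of numbers. Write $x=[\x]$ with $\x\in\ZZ^n$ primitive, $f(\x)=0$. Unwinding \eqref{eq:f-P}: the tangent space $\mathcal T_V(x)$, with its adelic metric, is the Euclidean lattice built from $\{\y\in\ZZ^n:\nabla f(\x)\cdot\y=0\}/\ZZ\x$ with the Fubini--Study scaling; since $\det\mathcal T_V=-K_V=\mathcal O(n-d)$, its average slope is $\tfrac{n-d}{n-2}\log H(x)+O(1)$, and $\ell(x)$ measures how far the Harder--Narasimhan polygon of this lattice falls below the straight line of that slope. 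Hence $\ell(x)<\ve$ forces a proper saturated rational subspace whose slope deviates from the average by a definite, $\ve$-controlled proportion of $\log H(x)$. Pulled back to $\AA^n$ this gives a rational linear subspace $L\subset\PP^{n-1}$ with $x\in L$ and
\begin{equation*}
H(L)\le H(x)^{\eta},\qquad \eta=\eta(d,n,\ve)\longrightarrow 0\ \text{as}\ \ve\to 0,
\end{equation*}
equivalently a short integer vector attached to $x$: either some $\u\in\ZZ^n\setminus\QQ\x$ with $\nabla f(\x)\cdot\u=0$ and $|\u|\le H(x)^{\eta}$ (a low-height tangent direction), or dually some $\u\in\ZZ^n$ with $\u\cdot\x=0$ and $|\u|\le H(x)^{\eta}$ (a low-height hyperplane through $x$). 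Thus every non-free $x$ of height $\le B$ yields a pair $(\x,\u)$ on an explicit incidence variety with $|\x|\le B$ and $0<|\u|\le B^{\eta}$.

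I would then bound the number of such pairs. The incidence conditions amount to (essentially) two degree-$d$ equations in the $2n$ coordinates of $(\x,\u)$, and the tool is Birch's circle method — in the version for \emph{systems} of forms \cite{birch} — applied to the affine variety they cut out in $\AA^{2n}$; this is the step that consumes the full hypothesis, the passage to pairs of integer vectors roughly doubling the number of conditions and so requiring about $\tfrac32$ as many variables as $V$ itself, which accounts for the bound $2n>6(d-1)2^{d-1}$, i.e.\ $n>3(d-1)2^{d-1}$ (one must also check, using the smoothness of $V$, that the relevant singular loci are small enough away from the coordinate subspaces $\x=\0$, $\u=\0$, which carry $O\bigl(B^{n\eta}\bigr)$ points and are discarded). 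Running the method with $\x$ in a box of size $B$ and $\u$ in a box of size $B^{\eta}$ — summed over dyadic sizes of $\u$ so as to exclude $\u=\0$ and $\u\in\QQ\x$ — the count is governed by the archimedean density of the $\u$-constraint in the small box, of order $B^{\eta(n-1)}/|\nabla f(\x)|\asymp B^{\eta(n-1)-(d-1)}$, times Birch's count $\asymp B^{n-d}$ for $f(\x)=0$; so the main term is $\ll B^{\,n-2d+1+\eta(n-1)}$ with a power-saving error. Since $n-2d+1+\eta(n-1)<n-d$ precisely when $\eta(n-1)<d-1$, and $\eta(\ve)\to 0$, this is $O\bigl(B^{n-d-\delta}\bigr)$ once $\ve<c_{d,n}$, where $c_{d,n}$ is the resulting threshold and $\delta=d-1-\eta(n-1)>0$. (Equivalently one can stratify by $\dim L$, bound $\#\{x\in(V\cap L)(\QQ):H(x)\le B\}$ by Birch's theorem applied to $V\cap L$ when $\dim L$ is large — using that a smooth hypersurface of degree $d\ge 3$ contains no linear space of dimension $\ge n/2$ — and by dimension-growth estimates when $\dim L$ is small, then sum against the $O\bigl(B^{O(\eta)}\bigr)$ admissible $L$.)

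The main obstacle is the second step: extracting from Peyre's slope-theoretic definition \eqref{eq:f-P} a \emph{quantitative and uniform} dictionary between ``$\ell(x)<\ve$'' and ``$x$ lies on a rational linear subspace of height $\le H(x)^{\eta(\ve)}$'', with $\eta(\ve)\to0$. The Harder--Narasimhan filtration of $\mathcal T_V(x)$ can jump several times, in ranks depending on $\x$, so one must show that \emph{whichever} graded piece destabilizes the lattice corresponds to an honest rational linear subspace of $\PP^{n-1}$ of $\ve$-controlled height, with all implied constants uniform in $\x$ and $B$; this takes care with the various adelic metrics and with the comparison between Arakelov degrees of the associated lattices and their successive minima (and with transference, in order to pass freely between the tangent and cotangent descriptions). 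A secondary, but decisive, technical point is to run the circle method for the coupled pair $(\x,\u)$ in a mildly skewed box with a power-saving error term — routine Birch-type analysis given the abundance of variables, but it is the genuinely two-form, $2n$-variable nature of the auxiliary problem that forces the numerology $n>3(d-1)2^{d-1}$ rather than Birch's weaker $n>2^d(d-1)$.
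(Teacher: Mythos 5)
Your overall architecture --- subtract the non-free points, reduce non-freeness to a lattice condition attached to $\Lambda_x=\{\y\in\ZZ^n:\y.\nabla f(\x)=0\}$, and bound the resulting count of pairs $(\x,\y)$ on the tangent bundle of the cone by a circle method in two sets of variables --- is the same family of ideas as the paper's, and you correctly identify the two delicate points. But the quantitative dictionary you propose in your second step is wrong, and this breaks the third step. By Lemma \ref{lem:Peyre}, $\ell(x)<\ve$ translates (up to constants and finitely many points) into $\widetilde{\ell}(x)<\ve$, i.e.\ $s_{n-1}(\Lambda_x)>\|\x\|^{1-\ve}$: the \emph{largest} successive minimum is abnormally large. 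Via Minkowski \eqref{eq:upper-lower} this only forces $s_1(\Lambda_x)\ll\|\x\|^{(d-2+\ve)/(n-2)}$, and via transference \eqref{eq:*} it forces a nonzero dual vector of length $\ll\|\x\|^{-(1-\ve)}$; in neither formulation does one obtain a rational linear subspace through $x$ of height $H(x)^{\eta}$ with $\eta(\ve)\to0$. Indeed a \emph{typical} point already has $s_1(\Lambda_x)\asymp\|\x\|^{(d-1)/(n-1)}$ and lies on sub-lattices of $\Lambda_x$ whose determinants are fixed positive powers of $\|\x\|$, so your hypothesis ``$H(L)\le H(x)^{\eta}$, $\eta\to0$'' is far too strong to be implied by $\ell(x)<\ve$: the Harder--Narasimhan deviation caused by small freeness is a constant-exponent deviation from the generic polygon, not a near-zero-height degeneration.

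Consequently the counting problem you set up --- $\x$ in a box of size $B$ and $\u$ in a box of size $B^{\eta}$ with $\eta$ tiny --- is both the wrong problem and one that Birch--Weyl differencing cannot handle, since the method degrades badly when the two boxes are that lopsided (in the paper this is visible in the hypothesis $Y\le X^{d-1}$ and the factor $\max\{X/Y,1\}^{(d-1)n/2^{d-1}}$ in Lemma \ref{lem:5.2}, harmless only because $Y=R^{1-\ve}$ is nearly as large as $R$). The paper's actual mechanism is different: Lemma \ref{lem:gauss} detects $s_{n-1}(\Lambda_x)>R^{1-\ve}$ by the observation that the Gaussian theta sum over $\y\in\Lambda_x$ at scale $R^{1-\ve}$, minus its Poisson main term, is always nonnegative and is $\gg R^{(1-\ve)(n-1)}/\det(\Lambda_x)$ precisely when the lattice is skew. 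One therefore needs a power saving over the \emph{main term} of the full-size pair count, obtained by matching the major-arc contribution exactly and bounding the minor arcs; the numerology $n>3(d-1)2^{d-1}$ arises from requiring $E-6>0$ with $E=n/(2^{d-2}(d-1))$, not from a ``two forms need $3/2$ as many variables'' heuristic. You also omit the complications from $\gcd(\nabla f(\x))$ at primes of bad reduction and from imprimitive $\x$ (handled in the paper via \eqref{eq:elimination'} and a M\"obius inversion), but these are secondary to the gap above.
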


Note that in our theorem the parameter $\ve$ is a constant, while in Peyre's notion of freeness one takes $\ve_B$ tending to zero. Thus our result is stronger than necessary for Peyre's formulation.
We shall show in \S \ref{s:peyre} that it suffices to work with a simpler freeness function 
$\widetilde\ell(x) $ that is defined in \eqref{eq:free} in terms of the largest successive minimum of a certain   associated lattice. Once this is achieved, the proof of Theorem \ref{t:P}
is guided by our investigation \cite{BS2} of the analogous situation for smooth hypersurfaces over  global fields of positive characteristic.  We shall find that the role of the Riemann--Roch theorem in \cite[\S 3]{BS2} is replaced by the Poisson summation formula.
After this the  argument runs in close parallel to \cite{BS2}, apart from in one essential difference associated to 
primes of bad reduction for $V$.

An interesting feature of our method is that it relies on counting integer solutions $(\x,\y)$ to the system of equations $f(\x) = \y.\nabla f(\x)=0$,  where $f$ is the defining polynomial of $V$. This is equivalent to counting integer points on the tangent bundle of the affine cone over $V$. This  suggests that it may be possible to bound the number of rational points of small freeness on a Fano variety $X$ by using asymptotics for the number of rational points on $X$ together with asymptotics for the  number of integral points on the tangent bundle of $X$.

\begin{ack}
The authors are very grateful to the anonymous referee for numerous pertinent remarks. 
While working on this paper the first  author was
supported by EPRSC 
grant \texttt{EP/P026710/1}. 
The research was partially conducted during the period the second author served as a Clay Research Fellow, and partially conducted during the period he was supported by Dr.\ Max R\"{o}ssler, the Walter Haefner Foundation and the ETH Zurich Foundation.
\end{ack}

\section{The geometry of numbers and the shape of lattices}

Most of the facts that we record in this section are taken from the book by Cassels \cite{cassels}. Recall that a {\em lattice} $\Lambda$ is a discrete additive subgroup of $\RR^n$.  Equivalently 
$$
\Lambda=\{x_1\mathbf{b}_1+\dots
+x_r\mathbf{b}_r: x_i\in \ZZ
\},
$$
for a set of linearly independent vectors $\b_1,\dots,\b_r\in \RR^n$. The {\em rank} of $\Lambda$ is then $\rank(\Lambda)=r$ and the {\em determinant} is $\det(\Lambda)=\sqrt{\det(B^tB)}$, where
$B$ is the $n\times r$ matrix formed from the column vectors $\b_1,\dots,\b_r$.
For each $1\leq k\leq r$ let $s_k(\Lambda)$ be the least $\sigma>0$ such that 
$\Lambda$ contains at least $k$ linearly independent vectors of Euclidean length bounded by $\sigma$. 
The $s_k(\Lambda)$ are the {\em successive minimima} of $\Lambda$ and they satisfy 
$
0<s_1(\Lambda)\leq s_2(\Lambda)\leq \dots \leq s_r(\Lambda).
$
Furthermore, it follows from Minkowski's second convex body theorem
\cite[\S VIII.3.2]{cassels}
 that
\begin{equation}\label{eq:upper-lower}
\det(\Lambda) \leq \prod_{i=1}^r s_i (\Lambda)\ll_n \det(\Lambda), 
\end{equation}
where the implied constant depends only on $n$.
The {\em dual lattice} is defined to be 
$$
\Lambda^*=\{\x\in \Span_\RR(\Lambda): \x.\y\in \ZZ \text{ for all $\y\in \Lambda$}\}
$$
This lattice has basis matrix $B(B^tB)^{-1}$ and so 
$\rank(\Lambda^*)=r$ and
$\det(\Lambda^*)=\det(\Lambda)^{-1}$. Appealing to work of 
Banaszczyk \cite[Thm.~2.1]{ban}, it  follows that 
\begin{equation}\label{eq:*}
1\leq s_k(\Lambda) s_{r-k+1}(\Lambda^*) \leq r ,
\end{equation}
for $1\leq k\leq r$.

The following result is well-known and will prove instrumental in our work. A proof is given as a special case of work by Heath-Brown \cite[Lemma 1]{hb84}.

\begin{lemma}\label{lem:lattice1}
For any  vector $\c\in\ZZp^{n}$ the set
$\Lambda=\{\x\in\ZZ^{n}: \c.\x=0\}$ is a lattice of dimension
$n-1$ and determinant $\det(\Lambda)=\|\c\|$, where $\|\cdot\|$ is the Euclidean norm on $\RR^n$.
\end{lemma}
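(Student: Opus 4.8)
The plan is to realise $\Lambda$ as the kernel of the linear functional $\phi\colon\ZZ^n\to\ZZ$, $\phi(\x)=\c.\x$, and to exploit the unimodularity of $\ZZ^n$. First I would note that primitivity of $\c$ means $\gcd(c_1,\dots,c_n)=1$, so B\'ezout's identity furnishes $\x_0\in\ZZ^n$ with $\c.\x_0=1$; hence $\phi$ is surjective. Consequently $\Lambda=\ker\phi$ is free (being a subgroup of $\ZZ^n$) of rank $n-1$, ranks being additive along the exact sequence $0\to\Lambda\to\ZZ^n\to\ZZ\to 0$. Since $\Lambda$ lies in the hyperplane $\{\x\in\RR^n:\c.\x=0\}$, which has real dimension $n-1$, it must span that hyperplane over $\RR$; thus $\Lambda$ is a lattice of rank $n-1$ in the sense recalled above, with $\Span_\RR(\Lambda)=\c^\perp$.

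For the determinant, fix $\mathbf{v}\in\ZZ^n$ with $\phi(\mathbf{v})=1$. Every $\x\in\ZZ^n$ can be written as $\bigl(\x-\phi(\x)\mathbf{v}\bigr)+\phi(\x)\mathbf{v}$ with $\x-\phi(\x)\mathbf{v}\in\Lambda$ and $\phi(\x)\mathbf{v}\in\ZZ\mathbf{v}$, while $\Lambda\cap\ZZ\mathbf{v}=0$; hence $\ZZ^n=\Lambda\oplus\ZZ\mathbf{v}$ as abelian groups. I would then compute the covolume of $\ZZ^n$ in two ways: on one hand $\det(\ZZ^n)=1$; on the other, adjoining $\mathbf{v}$ to a $\ZZ$-basis of $\Lambda$ yields a $\ZZ$-basis of $\ZZ^n$, and the ``base times height'' formula for lattice determinants gives $\det(\ZZ^n)=\det(\Lambda)\cdot h$, where $h$ is the Euclidean distance from $\mathbf{v}$ to $\Span_\RR(\Lambda)=\c^\perp$. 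That distance equals $|\c.\mathbf{v}|/\|\c\|=1/\|\c\|$. Combining the two evaluations gives $\det(\Lambda)=\|\c\|$, as desired. Alternatively: $\ZZ\c$ is a primitive rank-one sublattice of the self-dual lattice $\ZZ^n$, and for any primitive sublattice $M\subset\ZZ^n$ one has $\det(M)=\det\bigl((M\otimes\RR)^\perp\cap\ZZ^n\bigr)$; applied to $M=\ZZ\c$, for which $\det(\ZZ\c)=\|\c\|$, this is precisely the claim.

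The only step needing genuine care is the ``base times height'' identity for the Gram determinant $\det(\Lambda)=\sqrt{\det(B^tB)}$ employed in the paper: if $B$ is a basis matrix of $\Lambda$ and $B'=[\,B\mid\mathbf{v}\,]$, one must check $\sqrt{\det\bigl((B')^tB'\bigr)}=\det(\Lambda)\cdot h$. This follows by splitting $\mathbf{v}=\mathbf{v}_0+\mathbf{v}_1$ with $\mathbf{v}_0\in\Span_\RR(\Lambda)$ and $\mathbf{v}_1\perp\Span_\RR(\Lambda)$, using integer column operations to replace the final column of $B'$ by $\mathbf{v}_1$ (which alters neither the $\ZZ$-span nor the Gram determinant), and then expanding the resulting block Gram matrix, whose off-diagonal blocks vanish and whose lower-right entry is $\|\mathbf{v}_1\|^2=h^2$. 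This is routine linear algebra and constitutes the main---indeed the only---obstacle; everything else is bookkeeping. As the result is classical and also a special case of \cite[Lemma~1]{hb84}, I would keep the write-up brief.
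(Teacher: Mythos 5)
Your proof is correct. Note that the paper itself supplies no argument for this lemma: it simply cites Heath-Brown \cite[Lemma 1]{hb84}, of which the statement is a special case. Your self-contained route --- surjectivity of $\phi(\x)=\c.\x$ via B\'ezout, the splitting $\ZZ^n=\Lambda\oplus\ZZ\mathbf{v}$, and the base-times-height evaluation $1=\det(\ZZ^n)=\det(\Lambda)\cdot\operatorname{dist}(\mathbf{v},\c^{\perp})=\det(\Lambda)/\|\c\|$ --- is the standard proof and is sound; the alternative via the identity $\det(M)=\det\bigl((M\otimes\RR)^{\perp}\cap\ZZ^n\bigr)$ for primitive sublattices of a unimodular lattice is an equally valid shortcut. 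One cosmetic slip: in justifying the Gram-determinant identity you speak of ``integer column operations'' replacing the last column of $B'=[\,B\mid\mathbf{v}\,]$ by $\mathbf{v}_1$. The component $\mathbf{v}_0$ of $\mathbf{v}$ in $\Span_{\RR}(\Lambda)$ need not be an integral combination of the columns of $B$, so these operations are in general only \emph{real} unipotent column operations and do not preserve the $\ZZ$-span; but since unipotent operations $B'\mapsto B'U$ leave $\det\bigl((B')^tB'\bigr)$ unchanged, and only the Gram determinant is needed at that point, the computation is unaffected. With that wording fixed, the write-up is complete.
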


Given a lattice $\Lambda\subset \RR^n$ of rank $r$ it will be important to detect when the lattice is unusually skew, in the sense that the largest successive minimum is excessively large.  To be precise, we 
 seek a useful majorant for the indicator function
$$
\mathbf{1}_R(\Lambda)=\begin{cases}
1 & \text{ if $s_r(\Lambda)>R$,}\\
0& \text{ otherwise}.
\end{cases}
$$
This is achieved in the following simple result. 
 
\begin{lemma}\label{lem:gauss}
Let $\Lambda\subset \RR^n$ be a lattice of rank $r\leq n$ and let 
 $\omega:\RR^n\to \RR$ be the Gaussian function
$
\omega(\t)=\exp(-\pi \|\t\|^2).
$
Then
$$
\mathbf{1}_R(\Lambda)\leq \exp(\pi r^2)\frac{\det(\Lambda)}{R^r}\left(
\sum_{\y \in \Lambda} \omega(\y/R) - \frac{R^r}{\det(\Lambda)}\right).
$$
\end{lemma}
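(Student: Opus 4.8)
The plan is to evaluate the right-hand side \emph{exactly} via Poisson summation, and then to split into the two cases according to whether $s_r(\Lambda)\leq R$ or $s_r(\Lambda)>R$.

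First I would pass to the $r$-dimensional subspace $W=\Span_\RR(\Lambda)$, in which $\Lambda$ is a lattice of full rank and covolume $\det(\Lambda)$. Fixing an isometric identification $W\cong\RR^r$, the restriction of $\omega$ to $W$ is the standard Gaussian $\t\mapsto\exp(-\pi\|\t\|^2)$ on $\RR^r$, a Schwartz function equal to its own Fourier transform under the normalisation $\widehat{g}(\bxi)=\int g(\t)\e^{-2\pi i\,\t.\bxi}\,\d\t$. Applying Poisson summation to $\t\mapsto\omega(\t/R)$, whose Fourier transform is $\bxi\mapsto R^r\omega(R\bxi)$, gives
$$
\sum_{\y\in\Lambda}\omega(\y/R)=\frac{R^r}{\det(\Lambda)}\sum_{\w\in\Lambda^*}\omega(R\w).
$$
Substituting this into the bracket and cancelling the $\w=\0$ term against $R^r/\det(\Lambda)$, the quantity to be bounded collapses to
$$
\exp(\pi r^2)\,\frac{\det(\Lambda)}{R^r}\left(\sum_{\y\in\Lambda}\omega(\y/R)-\frac{R^r}{\det(\Lambda)}\right)=\exp(\pi r^2)\sum_{\w\in\Lambda^*\setminus\{\0\}}\omega(R\w).
$$
Since $\omega\geq 0$, this expression is non-negative, which already proves the lemma whenever $s_r(\Lambda)\leq R$, as then $\mathbf{1}_R(\Lambda)=0$.

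It remains to treat the case $s_r(\Lambda)>R$, where $\mathbf{1}_R(\Lambda)=1$, and the only extra input is the transference inequality \eqref{eq:*}. Taking $k=r$ there yields $s_r(\Lambda)\,s_1(\Lambda^*)\leq r$, so there is a nonzero $\w_0\in\Lambda^*$ with $\|\w_0\|=s_1(\Lambda^*)\leq r/s_r(\Lambda)<r/R$. Hence $\|R\w_0\|<r$ and
$$
\sum_{\w\in\Lambda^*\setminus\{\0\}}\omega(R\w)\geq\omega(R\w_0)=\exp(-\pi\|R\w_0\|^2)>\exp(-\pi r^2),
$$
so that $\exp(\pi r^2)\sum_{\w\in\Lambda^*\setminus\{\0\}}\omega(R\w)>1=\mathbf{1}_R(\Lambda)$, as required.

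I do not expect any genuine obstacle: the argument is essentially a single application of Poisson summation together with Banaszczyk's transference bound. The only points needing care are the bookkeeping of the Fourier-analytic normalisations, so that the self-duality of the Gaussian and the covolume factor $R^r/\det(\Lambda)$ appear correctly, and the harmless remark that Poisson summation should be carried out inside the subspace $\Span_\RR(\Lambda)$ rather than on all of $\RR^n$.
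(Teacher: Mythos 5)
Your proof is correct and follows essentially the same route as the paper's: Poisson summation (with the Gaussian's self-duality) to rewrite the bracket as the sum over nonzero dual vectors, nonnegativity to dispose of the case $s_r(\Lambda)\leq R$, and Banaszczyk's bound \eqref{eq:*} with $k=r$ to produce a dual vector of norm $<r/R$ contributing at least $\exp(-\pi r^2)$ when $s_r(\Lambda)>R$. Your explicit remark that Poisson summation should be performed inside $\Span_\RR(\Lambda)$ is a welcome precision that the paper leaves implicit.
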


\begin{proof}
Note that $\omega(\t)\geq 0$ for all $\t\in \RR^n$ and 
$\omega(\0)=1$.
It follows from Poisson summation that 
\begin{align*}
\sum_{\y \in \Lambda} \omega(\y/R) 
&=
 \frac{R^r}{\det(\Lambda)} 
\sum_{\y \in \Lambda^*} \omega(R\y)  
=
 \frac{R^r}{\det(\Lambda)} 
\left(1+\sum_{\substack{\y \in \Lambda^*\\ \y\neq \0}} \omega(R\y) \right),
 \end{align*}
 since $\widehat \omega =\omega$.
  Thus 
$$
\sum_{\y \in \Lambda} \omega(\y/R) - \frac{R^r}{\det(\Lambda)}\geq 0
$$ 
for any lattice $\Lambda$.
Moreover, 
according to \eqref{eq:*}, 
we have 
$
s_1(\Lambda^*)<r/R
$
if $s_r(\Lambda)>R$.
This means that there exists a non-zero vector $\y_0\in \Lambda^*$ such that 
$\|\y_0\|<r/R$. But then 
$$
\sum_{\substack{\y \in \Lambda^*\\ \y\neq \0}} \omega(R\y)\geq 
\omega(R\y_0)
=\exp(-\pi R^2\|\y_0\|^2)
\geq \exp(-\pi r^2).
$$
This implies that 
$$
\sum_{\y \in \Lambda} \omega(\y/R) - \frac{R^r}{\det(\Lambda)}\geq 
\frac{\exp(-\pi r^2) R^r}{\det(\Lambda)}
$$
if $s_r(\Lambda)>R$, which thereby completes the proof of the lemma. 
\end{proof}

\section{Free rational points on hypersurfaces}\label{s:peyre}

Suppose that $f\in \ZZ[x_1,\dots,x_n]$ is a non-singular form of degree $d$ that 
defines the hypersurface $V\subset \PP^{n-1}$. Any rational point $x\in V(\QQ)$ has a representative vector $\x\in \ZZp^n$ such that 
$f(\x)=0$ and $x=(x_1:\dots:x_n)$.  The measure of freeness of $x$ that we shall use in our paper is phrased in terms of the ``well-shapedness'' of the associated lattice
$$
\Lambda_x =\{\y\in \ZZ^n: \y.\nabla f(\x)=0\}.
$$
It follows from Lemma \ref{lem:lattice1} that $\Lambda_x \subset \ZZ^n$ is a lattice of rank $n-1$ and determinant
$$
\det(\Lambda_x)= \frac{\|\nabla f(\x)\|}{\gcd(\nabla f (\x))},
$$
where $\|\cdot\|$ is the Euclidean norm.
Let
$\Delta_f\neq 0$ be the absolute value of the discriminant of the non-singular polynomial $f$.
 From the definition of the discriminant as the resultant of the forms $\partial f/\partial x_1, \dots ,
\partial f/\partial x_n$, it follows that there 
exists $e\in \NN$ and   algebraic identities
 \begin{equation}\label{eq:elimination}
\Delta_f x_i^e =\sum_{1\leq j\leq n} g_{i,j}(\x)\frac{\partial f}{\partial x_i}(\x),
\end{equation}
%W[Johan => the exact value of e is probably n(d-2)+1.]
for $1\leq i\leq n$, where each $g_{i,j}$ has integer coefficients.
 In particular 
 \begin{equation}\label{eq:elimination'}
 \gcd(\nabla f(\x)) \mid \Delta_f
 \quad \text{ for all $\x\in \ZZp^n$}.
 \end{equation}
 
 Next we claim that 
\begin{equation}\label{eq:grad}
\|\x\|^{d-1} \ll
\|\nabla f(\x)\|\ll \|\x\|^{d-1} \quad \text{ for all $\x\in \RR^n$},
\end{equation}
for appropriate implied constants that depend only on $f$. 
Since $f$ has degree $d$ and so its partial derivatives have degree $d-1$, the 
upper bound is  clear.
To see the lower bound we note that  $\nabla f(\x)\neq \0$ for all $\x\in \RR^n$, 
since $f$ is non-singular. 
 Thus 
$\|\nabla f(\x)\|$
 is nowhere vanishing on the unit sphere and so   attains some minimum value $C$, say, there. Thus we have 
 $\| \nabla f(\x) \| \geq C \|\x\|^{d-1}$ in general because $f$ is homogeneous of degree $d$.
This establishes \eqref{eq:grad}.
 
 As we shall see shortly, 
Peyre defines a freeness function relative to the smallest slope on the tangent bundle $\mathcal{T}_V$. 
The measure of freeness that we shall work with is related to this, but it is phrased in terms of the relative size of the largest successive minimum of the lattice $\Lambda_x$. To be precise, we set
\begin{equation}\label{eq:free}
\widetilde{\ell}(x)=\frac{\log \|\x\| -\log s_{n-1}(\Lambda_x)}{\log \|\x\|}.
\end{equation}
Then   $\widetilde{\ell}(x)\geq \ve$ if and only if $s_{n-1}(\Lambda_x)\leq \|\x\|^{1-\varepsilon}$.

We gain some feeling for the behaviour of $\widetilde{\ell}(x)$ by recalling 
\eqref{eq:upper-lower}. Thus for ``typical'' $x$ one might expect 
the successive minima $s_k(\Lambda_x)$ to have the same order of magnitude, for $1\leq k\leq n-1$. If this were true it would follow 
from 
\eqref{eq:upper-lower} 
that 
$$
s_{n-1}(\Lambda_x) \asymp \left(
s_1(\Lambda_x)\dots 
s_{n-1}(\Lambda_x)
\right)^{1/(n-1)} \asymp
\det (\Lambda_x)^{1/(n-1)} \asymp
\|\x\|^{1-\frac{n-d}{n-1}},
$$
since   
$\|\nabla f(\x)\|\asymp \|\x\|^{d-1}$
by \eqref{eq:grad}.
  Such $x$ satisfy $\widetilde{\ell}(x)= (n-d)/(n-1) +o(1)$, as $H(x)\to \infty$.
The following example shows a familiar situation in which the freeness function is unusually small.

\begin{example}
Consider the case $d=3$ and $n=4$ of a smooth cubic surface $V\subset \PP^3$.
Let $L\subset V$ be a $\QQ$-line and define the associated rank $2$ lattice
$$
\mathsf L =\{\0\}\cup \{\z\in \ZZ^4: (z_1:\dots:z_4)\in L\} \subset\ZZ^4.
$$
We claim that, for any $\ve>0$, we have  $\widetilde{\ell}(x)<\varepsilon$ for all but finitely many  $x\in L$.
To see this we note that $f(\x+t\z)$ vanishes identically in $t$ for all $z\in L$. 
But then it follows that 
$
\mathsf L\subset \Lambda_x, 
$
in which case we have $1\leq s_1(\Lambda_x)\leq s_2(\Lambda_x)\ll_L 1$. It now follows from 
\eqref{eq:upper-lower} and \eqref{eq:grad} that 
$$
s_3(\Lambda_x)\gg_V  \frac{\|\nabla f(\x)\|}{s_1(\Lambda_x)s_2(\Lambda_x)}\gg_{L,V} \|\x\|^2.
$$
This therefore yields $\widetilde{l}(x) \leq -1 +o(1)$ and the claim.
\end{example}

We now explain how our freeness function \eqref{eq:free} relates to that defined by Peyre
\cite[D\'ef.~4.11]{peyre-freedom}. 
To begin with we can extend $V$ to a closed subscheme $V \subset \mathbb P^{n-1}_{\mathbb Z}$. A rational point $x \in V(\mathbb Q)$  gives a section $x \in V(\mathbb Z)$ of this scheme. Because this scheme is smooth of dimension $n-2$, the pullback
%\footnote{What is the difference between $(\mathcal T_V)_{x}$ and 
%$\mathcal T_x V$? It is the latter that gets mentioned in Peyre's notes \will{ They are the same, For me it is more natural to think of it as a pullback.}} 
$ (\mathcal T_V)_{x}$ of its tangent bundle along ${x}$ is a rank $n-2$ free $\mathbb Z$-module; i.e. a free lattice of rank $n-2$. Fixing a Riemannian metric on $V(\mathbb R)$ gives a metric on this lattice. Peyre defines the freeness of $x$ as 
\begin{equation}\label{eq:f-P}
\ell(x) =\frac{   \max\left\{(n-2)  \mu_{n-2} ( (\mathcal T_V)_{x}), 0\right\} }{  h(x) } ,\end{equation}
 where $h(x)=(n-d)\log \|\x\| +O(1)$ is the logarithmic anticanonical height of $x$ and 
$\mu_1 \geq \dots \geq \mu_{n-2}$ are the slopes defined by Bost. 
There are four main differences between Peyre's definition and ours:
\begin{enumerate}

\item Peyre includes a factor of 
$n-2$ in the numerator  
and the anticanonical height in the denominator instead of $\log \|\x\|$. 

\item Peyre uses the notion of slopes instead of successive minima. The slopes of a lattice differ from minus the logarithms of its successive minima by $O(1)$.
 
\item Peyre works in a slightly different lattice, namely the tangent lattice instead of the perpendicular lattice to $\nabla f(\x)$. These lattices are closely related, but not identical, and this discrepancy means that we only  produce 
an inequality (instead of an identity) between the two notions of freeness. 

\item Peyre defines the freeness to always be non-negative.
\end{enumerate}
The relationship between the two notions of freeness is articulated in the following result.

\begin{lemma}\label{lem:Peyre}
For any $x\in V(\QQ)$ we  have 
$$
\ell(x) \geq   \frac{n-2}{n-d} \widetilde{\ell}(x) + O \left(\frac{1}{h(x)}\right) .
$$  
\end{lemma}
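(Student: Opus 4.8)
The plan is to compare the two lattices appearing in Peyre's definition and in \eqref{eq:free} — the pullback tangent lattice $(\mathcal{T}_V)_x$ and the perpendicular lattice $\Lambda_x$ — and to reconcile the differences enumerated in items (1)--(4) above one at a time. First I would translate the statement about slopes into one about successive minima: by item (2), the smallest slope $\mu_{n-2}((\mathcal{T}_V)_x)$ differs from $-\log s_{n-2}((\mathcal{T}_V)_x)$ by $O(1)$, so it suffices to bound $-\log s_{n-2}((\mathcal{T}_V)_x)$ from below in terms of $\log\|\x\|-\log s_{n-1}(\Lambda_x)$. Since $\ell(x)$ is defined with a $\max\{\cdot,0\}$ in the numerator (item (4)) and $\widetilde\ell(x)$ can be negative, it is enough to prove the inequality $\ell(x)\,h(x)\ge (n-2)\bigl(\log\|\x\|-\log s_{n-1}(\Lambda_x)\bigr)+O(1)$ \emph{before} taking the maximum, since $\max\{A,0\}\ge A$; then divide by $h(x)=(n-d)\log\|\x\|+O(1)$, which absorbs the height discrepancy in item (1) and produces the factor $\tfrac{n-2}{n-d}$ together with the $O(1/h(x))$ error term.

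The geometric heart of the argument is the relation between $(\mathcal{T}_V)_x$ and $\Lambda_x$. The affine cone $\widehat V=\{f=0\}\subset\AA^n$ has at the point $\x$ the tangent space $\{\y\in\RR^n:\y.\nabla f(\x)=0\}$, which is exactly $\Lambda_x\otimes\RR$, and $\Lambda_x=\widehat V$'s integral tangent vectors. The projective tangent space $(\mathcal{T}_V)_x$ is the quotient of this hyperplane by the line $\RR\x$ (the Euler/radial direction, which lies in the hyperplane since $\x.\nabla f(\x)=d f(\x)=0$). Thus there is a short exact sequence of lattices $0\to \ZZ\x'\to \Lambda_x\to (\mathcal{T}_V)_x\to (\text{finite})$, where $\x'$ is the primitive integer vector on the ray $\RR\x$ — and since $\x\in\ZZp^n$ we have $\x'=\x$, with $\|\x'\|=\|\x\|$. (The finiteness of the cokernel, rather than an isomorphism onto the quotient, is precisely the source of item (3): the metric structures are comparable up to $O(1)$ but the lattices are only commensurable.) I would make this precise by choosing an appropriate affine chart or by working with the metrized exact sequence directly, noting that the Riemannian metric on $V(\RR)$ and the Euclidean metric restricted to the hyperplane induce metrics differing by bounded factors on compact pieces, which is all that is needed since everything is homogeneous.

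From the exact sequence I would extract the key inequality between successive minima. Quotienting out the short vector direction $\ZZ\x$ from $\Lambda_x$, a basis realizing the successive minima $s_1(\Lambda_x)\le\dots\le s_{n-1}(\Lambda_x)$ maps to a generating set of $(\mathcal{T}_V)_x$ (up to bounded index and bounded metric distortion); since $\x$ itself has length $\|\x\|$ and $s_{n-1}(\Lambda_x)$ is the largest minimum, the quotient lattice's largest successive minimum satisfies $s_{n-2}((\mathcal{T}_V)_x)\ll s_{n-1}(\Lambda_x)/?$ — more carefully, one uses that removing one direction from an $(n-1)$-dimensional lattice can only decrease the top minimum, giving $s_{n-2}((\mathcal{T}_V)_x)\ll s_{n-1}(\Lambda_x)$, hence $-\log s_{n-2}((\mathcal{T}_V)_x)\ge -\log s_{n-1}(\Lambda_x)+O(1)$. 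Combining with $-\log s_{n-1}(\Lambda_x)=\log\|\x\|\cdot\widetilde\ell(x)$ from \eqref{eq:free} and with $h(x)=(n-d)\log\|\x\|+O(1)$, and recalling $\mu_{n-2}\ge -\log s_{n-2}+O(1)$, yields $\ell(x)\ge\frac{(n-2)(-\log s_{n-2}((\mathcal{T}_V)_x))}{h(x)}+O(1/h(x))\ge\frac{n-2}{n-d}\widetilde\ell(x)+O(1/h(x))$.

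The main obstacle is item (3): controlling the passage between the two lattices with only $O(1)$ losses, uniformly over all $x\in V(\QQ)$ of growing height. One must check that the index of the image of $\Lambda_x$ in $(\mathcal{T}_V)_x\oplus(\ZZ\x)$ — equivalently, the torsion and the covolume discrepancy coming from the scheme-theoretic model $V\subset\PP^{n-1}_\ZZ$ — is bounded independently of $x$, and that the Riemannian metric chosen by Peyre is comparable to the Euclidean-quotient metric with constants independent of $x$. Both follow from compactness (the Riemannian metrics live on the compact manifold $V(\RR)$, and the model is a fixed scheme of finite type), together with the homogeneity identities \eqref{eq:elimination}--\eqref{eq:grad} already established, which pin down $\det(\Lambda_x)$ and $\|\nabla f(\x)\|$ up to constants depending only on $f$; I would cite these to absorb the relevant factors into the $O(1)$ and $O(1/h(x))$ terms.
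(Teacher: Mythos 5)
Your overall strategy is the same as the paper's: identify $(\mathcal T_V)_x$ with the quotient of $\Lambda_x$ by $\ZZ\x$, compare the largest successive minimum (equivalently, the smallest slope) of the quotient with that of $\Lambda_x$, and divide by $h(x)$. But there is a genuine gap at the key quantitative step, and it is masked by a false identity. You write $-\log s_{n-1}(\Lambda_x)=\log\|\x\|\cdot\widetilde\ell(x)$; by \eqref{eq:free} the correct identity is $\log\|\x\|-\log s_{n-1}(\Lambda_x)=\log\|\x\|\,\widetilde\ell(x)$. Your geometric input, namely $s_{n-2}((\mathcal T_V)_x)\ll s_{n-1}(\Lambda_x)$ for the plain Euclidean quotient metric, therefore only yields
$$
\ell(x)\ \geq\ \frac{n-2}{n-d}\bigl(\widetilde\ell(x)-1\bigr)+O\left(\frac{1}{h(x)}\right),
$$
which is weaker by the additive constant $\tfrac{n-2}{n-d}$ and is useless for the application: a typical point has $\widetilde\ell(x)\approx\tfrac{n-d}{n-1}<1$, so this bound says nothing.

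The missing ingredient is the metric normalization coming from the $\mathcal O_{\PP^{n-1}}(1)$ twist in the Euler sequence: the Arakelov metric on $(\mathcal T_{\PP^{n-1}})_x\cong\ZZ^n/\ZZ\x$ is $\|\y+\ZZ\x\|=\min_{t\in\RR}\|\y-t\x\|/\|\x\|$, i.e.\ the Euclidean quotient metric \emph{divided by} $\|\x\|$. This is forced by requiring the metric to depend only on the point of $\PP^{n-1}(\RR)$ and not on the chosen integral representative $\x$, and it is precisely what contributes $+\log\|\x\|$ to every slope, so that $\mu_{n-2}((\mathcal T_V)_x)=\mu_{n-2}(\Lambda_x/\ZZ\x)+\log\|\x\|\geq -\log s_{n-1}(\Lambda_x)+\log\|\x\|$; this is the source of the $\log\|\x\|$ in the numerator of $\widetilde\ell$. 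Your appeal to ``metrics differing by bounded factors on compact pieces'' cannot recover this: the factor $1/\|\x\|$ is unbounded as the height grows, and compactness of $V(\RR)$ controls the Riemannian metric on the projective variety but not the scaling of the identification with the fixed lattice $\ZZ^n/\ZZ\x$. Once the renormalization is inserted your argument closes up and coincides with the paper's proof (the paper runs the quotient comparison through minimum slopes rather than successive minima of the quotient, but that difference is cosmetic).
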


\begin{proof}
We first explain how to relate the tangent lattice to $\Lambda_{x}$, and then why this leads to the stated  inequality. 
We have an Euler exact sequence  
\[ 0 \to \mathcal O_{\mathbb P^{n-1}} \to  \mathcal O_{\mathbb P^{n-1}} (1) ^n \to \mathcal T_{\mathbb P^{n-1} } \to 0\] 
on $\PP_\ZZ^{n-1}$.
This induces an exact sequence \[ 0 \to \left( \mathcal O_{\mathbb P^{n-1}}\right)_{x} \to \left( \mathcal O_{\mathbb P^{n-1}} (1) ^n\right)_{x}  \to \left(\mathcal T_{\mathbb P^{n-1} } \right)_{x} \to 0.\] We have
$\left( \mathcal O_{\mathbb P^{n-1}}\right)_{x} = \mathbb Z$ and  $\left(  \mathcal O_{\mathbb P^{n-1}} (1) ^n\right)_{x}  = \mathbb Z^n$ because $\mathcal O_{\mathbb P^{n-1}}$ and $\mathcal O_{\mathbb P^{n-1}} (1) $ are rank one locally free sheaves, so their pullback along $x$ are rank one locally free sheaves on $\operatorname{Spec} \mathbb Z$, which are all isomorphic to $\mathbb Z$. The map 
between them is multiplication by $\x$, so the tangent lattice of $\mathbb P^{n-1}$ is the quotient lattice $\mathbb Z^n / \mathbb Z \x$. We claim that 
the induced metric on this is 
the renormalized metric 
\[ \| \y+ \mathbb Z \x \| = \frac{ \min_{ t\in \mathbb R} ( \| \y - t \x\|) }{ \| \x \| }.\] Formally this arises from the
 $\mathcal O_{\PP^{n-1}}(1)$ twist, but we can see this explicitly since the natural isomorphism between $\mathbb R^n/\mathbb R\x$ and the tangent space to $\mathbb P^{n-1}_{\mathbb R}$ at $\x$ depends on the scaling of the vector $\x$ and not just on its equivalence class in $\mathbb P^{n-1}$. The Arakelov metric on the tangent bundle of projective space must depend continuously on a point in projective space. To make it do so, we divide by $\| \x \| $.

Calculating $\left( \mathcal T_{V}\right)_{x}$ is now relatively easy. Consider  the exact sequence \[ 0 \to \mathcal T_{V} \to \mathcal T_{\mathbb P^{n-1}} \to \mathcal O_V (d) \to 0,\] where the second map represents dot product with $\nabla f(\x)$. We can realise  $\left( \mathcal T_{V}\right)_{x}$ as the kernel of dotting with $\nabla f (\x)$ in $\mathbb Z^n/\mathbb Z \x$, with no further renormalization necessary.  Invoking some basic properties of slopes, we deduce that
%\footnote{I think more about these steps would be good..\will{Added more detail, but it's not perfect - more to come later. One thing I realized while doing this is that Peyre didn't actually get the definition from Bost, as Bost only defined the maximum and minimum slopes, they're just along the lines of Bost's definition. I think Peyre mentioned in his lecture that the reason he uses slopes instead of successive minima is that they behave better in exact sequences, but I didn't find the precise statement of that in his or Bost's writeup. I assumed that a good writeup of the properties of slopes existed somewhere - if not, it may be worth spending a lot more space on this to explain it, even though we don't use slopes in our main paper.}}
\begin{align*}
\mu_{n-2} ( (\mathcal T_V)_{x}) &= \mu_{n-2} ( \Lambda_x / \mathbb Z\x) + \log \|\x \|  \\
&\geq \mu_{n-1} ( \Lambda_x) + \log \| \x\|  \\
&\geq 
-s_{n-1} ( \Lambda_{x} )   + \log \| \x\|.
\end{align*} Indeed, the first step uses the fact that, when we divide the metric of a lattice by $\|\x\|$, we add $\log \| x\|$ to each slope of the lattice, which is clear from the definition \cite[D\'ef~4.4]{peyre-freedom} and is a special case of \cite[Lemma 4.2]{Bost}. The second step uses the fact that the minimum slope of a quotient lattice is at least the minimum slope of the original lattice, which is immediate from the definition of the minimum slope as a minimum over quotients of the lattice in \cite[p.~195]{Bost} and the equivalence of Bost's minimum slope and the last slope in Peyre's ordering. The last step uses \cite[Remarque~4.7(b)]{peyre-freedom}. 

The inequality 
\[ \frac{ (n-2)  \mu_{n-2} ( (\mathcal T_V)_{x})}{  h(x) } \geq \frac{n-2}{n-d} \widetilde{\ell}(x) + O \left(\frac{1}{h(x)}\right) \] immediately follows, since
 $h(x) = (n-d) \log  \|\x\|+O(1)$.  We therefore 
 have 
 \begin{align*}
 l(x) = \frac{   \max\left\{(n-2)  \mu_{n-2} ( (\mathcal T_V)_{x}), 0\right\} }{  h(x) } &\geq  \frac{(n-2)  \mu_{n-2} ( (\mathcal T_V)_{x}) }{  h(x) } \\
 &\geq \frac{n-2}{n-d} \widetilde{\ell}(x) + O \left(\frac{1}{h(x)}\right) 
 \end{align*}
  except if $h(x)<0$ where the middle inequality fails, but this happens for only finitely many $x$ and we can handle it by assuming that the constant in the $ O (1/h(x))$ term is sufficiently large.
\end{proof}

Returning to
\eqref{eq:NV}, we can now make sense of the counting function
\begin{align*}
N_V^{\ve\text{-free}}(B)
&=\#\left\{x\in V(\QQ): \ell(x)\geq \ve, ~H(x)\leq B\right\}\\
&=\#\left\{x\in V(\QQ): H(x)\leq B\right\}-E_{V,\ve}(B),
\end{align*}
for any $\ve>0$, 
where
$$
E_{V,\ve}(B)=\#\left\{x\in V(\QQ): \ell(x)<\ve, ~H(x)\leq B\right\}.
$$
The first term is handled by   \eqref{eq:birch}, since $3(d-1)2^{d-1}\geq 2^d(d-1).$
Moreover, in view of Lemma \ref{lem:Peyre} and the fact that $d\geq 3$,  we have 
\begin{align*}
E_{V,\ve}(B)
&\leq \#\left\{x\in V(\QQ): \widetilde \ell(x)< \frac{n-d}{n-2}\ve +o(1), ~H(x)\leq B\right\}+O(1)\\
&\leq \#\left\{x\in V(\QQ): \widetilde \ell(x)< \ve, ~H(x)\leq B\right\} + O(1) ,
\end{align*} 
where the presence of the $O(1)$ term is needed to account for the low height points.
Hence 
\begin{align*}
E_{V,\ve}(B)
&\leq \frac{1}{2}\#\left\{\x\in \ZZp^n: 
\begin{array}{l}
f(\x)=0,~\|\x\|\leq B\\
s_{n-1}(\Lambda_x)>\|\x\|^{1-\ve}
\end{array}
\right\} + O(1) ,
\end{align*}
on taking into account the action of the units $\{\pm 1\}$  on $\PP^{n-1}(\QQ)$.
We require an upper bound for $E_{V,\ve}(B)$ which is $O_{V,\ve}(B^{n-d-\delta})$
for an appropriate $\delta>0$, and which is valid for as wide a range of $\ve$ as possible. 

To handle $E_{V,\ve}(B)$ it will be convenient to break the range for $\|\x\|$ into dyadic intervals. Thus
\begin{equation}\label{eq:B-R}
\begin{split}
E_{V,\ve}(B)
&\leq 
\sum_{\substack{R=2^j\\1\leq R\leq 2 B}}
\#\left\{\x\in \ZZp^n: 
\begin{array}{l}
f(\x)=0,~R/2<\|\x\|\leq R\\
s_{n-1}(\Lambda_x)>R^{1-\ve}
\end{array}
\right\}+O(1)\\
&=\sum_{\substack{R=2^j\\1\leq R\leq 2B}}
E_{V,\ve}^*(R)+O(1),
\end{split}\end{equation}
say.
Appealing to Lemma \ref{lem:gauss}, we deduce that 
\begin{equation}\label{eq:E_eps}
E_{V,\ve}^*(R) \ll1+
\sum_{\substack{
\x\in \ZZp^n\\
f(\x)=0\\R/2<\|\x\|\leq R}}
\frac{\det(\Lambda_x)}{R^{(1-\ve)(n-1)}}\left(
\sum_{\y \in \Lambda_x} \omega(\y/R^{1-\ve}) - \frac{R^{(1-\ve)(n-1)}}{\det(\Lambda_x)}\right),
\end{equation}
where  $\omega(\t)=\exp(-\pi\|\t\|^2)$.
In what follows it will be convenient to write
$\mathsf d(\x)=\det(\Lambda_x)$ 
when $x$ is represented by a vector $\x\in \ZZp^n$.
In view of \eqref{eq:elimination'} 
we have $\gcd(\nabla f (\x))=\gcd(\nabla f (\x),\Delta_f)$, so that 
\begin{equation}\label{eq:dx}
\mathsf d(\x)=
 \frac{\|\nabla f(\x)\|}{\gcd(\nabla f (\x),\Delta_f)}.
\end{equation}
We shall use this formula to extend the definition of 
$\mathsf d(\x)$ to all (not necessarily primitive) vectors $\x\in \ZZ^n$.

Let us write $\|\x\|\sim R$ to denote the inequalities $R/2<\|\x\|\leq R$.
In order to treat 
$E_{V,\ve}^*(R)$ we begin by analysing the 
term 
$$
M_\ve(R)
=
\sum_{\substack{
\x\in \ZZp^n\\
f(\x)=0\\\|\x\|\sim  R}}
\mathsf{d}(\x)
\sum_{\y \in \Lambda_x} \omega(\y/R^{1-\ve}).
$$
It is clear that   $\y.\nabla f(k\x)=0$ if and only if $\y.\nabla f(\x)=0$, 
for any $k\in\NN$.
Hence, an application of M\"obius inversion 
yields
\begin{equation}\label{eq:MeR}
\begin{split}
M_\ve(R)
&=
\sum_{k\leq R}\mu(k)
\sum_{\substack{
\x\in \ZZ^n\\
f(\x)=0\\\|\x\|\sim R/k}}
\mathsf{d}(k\x)
\sum_{\substack{\y \in \ZZ^n\\ 
\y.\nabla f(\x)=0} } \omega(\y/R^{1-\ve})\\
&=
\sum_{k\leq R}\mu(k)
\sum_{\substack{
\x\in \ZZ^n\\
f(\x)=0\\\|\x\|\sim R/k}}
\mathsf{d}(k\x)
\int_0^1 S(\beta) \d\beta,
\end{split}
\end{equation}
where
$$
S(\beta)=
 \sum_{\y \in \ZZ^n} \omega(\y/R^{1-\ve})  e(\beta \y.\nabla f(\x)) .
$$
Our plan is to define a set of ``major arcs'' for the interval $[0,1]$ whose integral matches the expected main term 
$R^{(1-\ve)(n-1)}/\det(\Lambda_x)$ from 
\eqref{eq:E_eps}.

\section{Identification of the major arcs}

Our identification of the major arcs follows the path that was paved in  \cite[\S 4]{BS2}.
Henceforth all implied constants will be allowed to depend on $f$.
It will be convenient to set
$$
X=X_k = R/k \quad \text{ and } \quad Y=R^{1-\ve},
$$
where $k$ is the parameter occurring in \eqref{eq:MeR}.
Since $\hat \omega=\omega$, it follows from  Poisson summation that 
\begin{align*}
S(\beta)
&=
 \sum_{\y \in \ZZ^n} \omega(\y/Y)  e(\beta \y.\nabla f(\x)) \\
 &=Y^n\sum_{\y\in \ZZ^n} \omega(Y\beta \nabla f(\x)-Y\y), 
\end{align*}
 for any $\beta\in [0,1]$ and any  $\x\in \ZZ^n$. 
Let us use $\langle \alpha\rangle=\inf_{m\in \ZZ}|m-\alpha|$ to denote the distance to the nearest integer. 
We observe that 
$$
\omega(\t)\ll_N (1+\|\t\|)^{-N},
$$
for any  $\t\in \RR^n$ 
and any $N\geq 0$.
Hence it is not hard to  see that 
\begin{equation}\label{eq:Sb-1}
S(\beta)=Y^n
\omega\left(Y\left\langle\beta\frac{\partial f(\x)}{\partial x_1}\right\rangle,
\dots, 
Y\left\langle\beta\frac{\partial f(\x)}{\partial x_n}\right\rangle
\right)
+O_N(Y^{-N}),
\end{equation}
for any $N\geq 0$.
Led by this   we make the following definition.

\begin{definition}[Major arcs]\label{def:major}
For any $\eta>0$ we
 set
$$
\mathfrak{M}_\eta(X,Y)=\bigcup_{q\leq Y^{1-\eta}} \bigcup_{\substack{0\leq a<q\\ \gcd(a,q)=1}} \left\{\beta\in [0,1): \left|q\beta-a\right|\leq \frac{1}{C_fX^{d-1}}\right\},
$$
where $C_f>0$ is a sufficiently large constant that only depends on $f$.
\end{definition}

The following result is concerned with the size of the exponential sum $S(\beta)$ when $\beta$
belongs to this set of major arcs.

\begin{lemma}\label{lem:Sbeta}
Let $N\geq 0$, let 
$\x\in \ZZ^n$ with
$\|\x\|\leq X$,  and let $\beta =a/q+\theta\in \mathfrak{M}_\eta(X,Y)$ for coprime integers $a,q$ such that 
$0\leq a<q$ and $|q\theta| \leq 1/ (C_f X^{d-1})$. Then 
$$
S(\beta)=
Y^n\omega(Y\theta \nabla f(\x))+
O_N(Y^{-N}) 
$$
if $q\mid \nabla f(\x)$ and $|\theta|\leq Y^{-1+\eta}/\|\nabla f(\x)\|$, with 
$S(\beta)=O_N(Y^{-N})$ otherwise.
\end{lemma}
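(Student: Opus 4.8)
The plan is to start from the Poisson-summed expression
\[
S(\beta)=Y^n\sum_{\y\in\ZZ^n}\omega\!\left(Y\beta\nabla f(\x)-Y\y\right)
\]
and use the decay estimate $\omega(\t)\ll_N(1+\|\t\|)^{-N}$ to show that only the vector $\y$ nearest to $\beta\nabla f(\x)$ contributes non-negligibly. This is precisely the content of \eqref{eq:Sb-1}: one has
\[
S(\beta)=Y^n\,\omega\!\Big(Y\big\langle\beta\tfrac{\partial f}{\partial x_1}(\x)\big\rangle,\dots,Y\big\langle\beta\tfrac{\partial f}{\partial x_n}(\x)\big\rangle\Big)+O_N(Y^{-N}).
\]
So the whole lemma reduces to analysing, for $\beta=a/q+\theta$ on the major arcs, the quantities $\langle\beta\,\partial_i f(\x)\rangle$ and showing: (i) unless $q\mid\nabla f(\x)$, at least one of them is bounded below by an absolute constant, forcing $S(\beta)=O_N(Y^{-N})$; (ii) if $q\mid\nabla f(\x)$, then $\beta\,\partial_i f(\x)=\tfrac{a}{q}\partial_i f(\x)+\theta\,\partial_i f(\x)$ has integral first summand, so $\langle\beta\,\partial_i f(\x)\rangle=\langle\theta\,\partial_i f(\x)\rangle$, and this equals $|\theta\,\partial_i f(\x)|$ once $|\theta|\,\|\nabla f(\x)\|<1/2$; (iii) if $q\mid\nabla f(\x)$ but $|\theta|>Y^{-1+\eta}/\|\nabla f(\x)\|$, then $Y|\theta\,\partial_i f(\x)|$ is large for some $i$ and again $S(\beta)=O_N(Y^{-N})$.

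The key step is (i). Write $\partial_i f(\x)=q m_i+r_i$ with $0\le r_i<q$, and suppose some $r_{i_0}\ne 0$. Then
\[
\beta\,\partial_{i_0}f(\x)=\frac{a\,\partial_{i_0}f(\x)}{q}+\theta\,\partial_{i_0}f(\x)=\big(\text{integer}\big)+\frac{a r_{i_0}}{q}+\theta\,\partial_{i_0}f(\x),
\]
and since $\gcd(a,q)=1$ and $0<r_{i_0}<q$ we have $\langle a r_{i_0}/q\rangle\ge 1/q$. Using the grad bound \eqref{eq:grad}, $\|\nabla f(\x)\|\ll\|\x\|^{d-1}\ll X^{d-1}$, so $|\theta\,\partial_{i_0}f(\x)|\le |\theta|\,\|\nabla f(\x)\|\ll X^{d-1}/(C_fq X^{d-1})=1/(C_f q)$, which is $\le 1/(2q)$ once $C_f$ is large enough. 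Hence $\langle\beta\,\partial_{i_0}f(\x)\rangle\ge 1/q-1/(2q)=1/(2q)\ge 1/(2Y^{1-\eta})$, so $Y\langle\beta\,\partial_{i_0}f(\x)\rangle\gg Y^{\eta}$, and the decay of $\omega$ gives $Y^n\omega(\dots)\ll_N Y^n(Y^\eta)^{-N'}\ll_N Y^{-N}$ after relabelling $N'$. The main obstacle — really the only delicate point — is pinning down the role of $C_f$: one must choose it so that simultaneously the trivial bound $|q\theta|\le 1/(C_f X^{d-1})$ together with $\|\nabla f(\x)\|\le C X^{d-1}$ (the constant $C$ from \eqref{eq:grad}) yields $|\theta|\,\|\nabla f(\x)\|<1/(2q)$; this is where "$C_f$ sufficiently large, depending only on $f$" from Definition~\ref{def:major} is used.

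For the surviving case $q\mid\nabla f(\x)$ and $|\theta|\le Y^{-1+\eta}/\|\nabla f(\x)\|$, each $\langle\beta\,\partial_i f(\x)\rangle=|\theta\,\partial_i f(\x)|$ (the denominator bound makes $|\theta|\,\|\nabla f(\x)\|\le Y^{-1+\eta}<1/2$), so the vector inside $\omega$ in \eqref{eq:Sb-1} is exactly $Y\theta\nabla f(\x)$, giving
\[
S(\beta)=Y^n\omega(Y\theta\nabla f(\x))+O_N(Y^{-N}),
\]
as claimed. Conversely, if $q\mid\nabla f(\x)$ but $|\theta|>Y^{-1+\eta}/\|\nabla f(\x)\|$, pick $i$ with $|\partial_i f(\x)|\ge\|\nabla f(\x)\|/\sqrt n$; then $Y|\theta\,\partial_i f(\x)|>Y^{\eta}/\sqrt n$, but we also need $\langle\beta\,\partial_i f(\x)\rangle=|\theta\,\partial_i f(\x)|$ rather than something smaller — this holds provided $|\theta\,\partial_i f(\x)|\le 1/2$, which one can arrange by first splitting off the subregion $Y^{-1+\eta}/\|\nabla f(\x)\|<|\theta|\le c/\|\nabla f(\x)\|$ (handled as just described) and noting the complementary region $|\theta|>c/\|\nabla f(\x)\|$ is empty because $|\theta|\le 1/(C_f q X^{d-1})\ll 1/\|\nabla f(\x)\|$ forces a contradiction for $C_f$ large. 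In all cases one concludes $S(\beta)=O_N(Y^{-N})$, completing the proof; all implied constants depend only on $f$ and $N$, as required.
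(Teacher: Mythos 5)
Your proposal is correct and follows essentially the same route as the paper: both start from \eqref{eq:Sb-1} (Poisson summation plus the rapid decay of $\omega$) and then analyse $\langle \beta\,\partial f(\x)/\partial x_i\rangle$ case by case, using $\gcd(a,q)=1$ to get a lower bound $\gg 1/q \geq Y^{-1+\eta}$ when $q\nmid \nabla f(\x)$, and using $|q\theta|\leq 1/(C_fX^{d-1})$ together with \eqref{eq:grad} and the choice of $C_f$ to ensure $|\theta|\,\|\nabla f(\x)\|$ is small enough that $\langle\beta\,\partial_i f(\x)\rangle=|\theta\,\partial_i f(\x)|$ when $q\mid\nabla f(\x)$. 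The only differences are cosmetic (your constant $1/(2q)$ versus the paper's $1/(4q)$, and a slightly more roundabout discussion of the case $q\mid\nabla f(\x)$ with $|\theta|$ large, which the paper handles directly since the major arc condition already forces $|\theta|\,\|\nabla f(\x)\|\leq 1/(4q)$).
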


\begin{proof}
Let $\beta=a/q+\theta\in \mathfrak{M}_\eta(X,Y)$. Then
$$
\left|\theta\frac{\partial f(\x)}{\partial x_i}\right|
\leq \frac{1}{4q}
$$
for any $1\leq i\leq n$, provided that $C_f$ is large enough.
Next, we see that
$$
\left\langle\beta\frac{\partial f(\x)}{\partial x_i}\right\rangle=
\left|\theta\frac{\partial f(\x)}{\partial x_i}\right|
$$
if $q\mid \partial f(\x)/\partial x_i$. Thus 
it follows from 
 \eqref{eq:Sb-1}
 that $S(\beta)=O_N(Y^{-N})$ 
 if $q\mid \nabla f(\x)$ and $|\theta|> Y^{-1+\eta}/\|\nabla f(\x)\|$.
 Alternatively, 
 if we have $q\mid \nabla f(\x)$ and $|\theta|\leq  Y^{-1+\eta}/\|\nabla f(\x)\|$ then 
 clearly
$$
 Y^n
 \omega\left(Y\left\langle\beta\frac{\partial f(\x)}{\partial x_1}\right\rangle,
\dots, 
Y\left\langle\beta\frac{\partial f(\x)}{\partial x_n}\right\rangle
\right)=
Y^n\omega(Y\theta \nabla f(\x)).
$$  
Finally, 
if $q\nmid \partial f(\x)/\partial x_i$ for some $i\in \{1,\dots,n\}$   then there exists a non-zero integer $u\in [-q/2,q/2]$ such that 
$a \partial f(\x)/\partial x_i\equiv u \bmod{q}$, whence
$$
\left\langle\beta\frac{\partial f(\x)}{\partial x_i}\right\rangle\geq \frac{3}{4q}\geq \frac{3}{4Y^{1-\eta}}, 
$$
for $\beta\in \mathfrak{M}_\eta(X,Y)$. 
This shows that $S(\beta)=O_N(Y^{-N})$ in this case, as required to complete the proof of the lemma.
\end{proof}

The following result is concerned with the evaluation of the  integral of $S(\beta)$ over the  major arcs.

\begin{lemma}\label{lem:major'}
Let $N\geq 0$ and  assume that $\|\x\|\sim X$. Then 
$$
\int_{\mathfrak{M}_\eta(X,Y)} S(\beta)\d\beta=
\frac{Y^{n-1}\gcd(\nabla f(\x))}{\|\nabla f(\x)\|}\left(1+
O\left( \1(\x)\right)\right)
+
O_N(Y^{-N}),
$$
where
$$\1(\x)
=
\begin{cases}
1 & \text{
if ${ \gcd (\nabla f (\x) ) C_f^2> Y^{ 1-  \eta}}$},\\
0 & \text{ otherwise.}
\end{cases}
$$
\end{lemma}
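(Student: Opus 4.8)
The plan is to combine Lemma \ref{lem:Sbeta} with a direct computation of the resulting Gaussian integral. First I would use Lemma \ref{lem:Sbeta} to restrict the integral over $\mathfrak M_\eta(X,Y)$ to the portion that actually contributes. By that lemma, for $\beta = a/q+\theta$ with $q\le Y^{1-\eta}$ and $|q\theta|\le 1/(C_fX^{d-1})$, the sum $S(\beta)$ is $O_N(Y^{-N})$ unless $q\mid \nabla f(\x)$, so only those $q$ dividing every component of $\nabla f(\x)$ survive. Write $g=\gcd(\nabla f(\x))$; the admissible moduli are the divisors $q\mid g$ with $q\le Y^{1-\eta}$. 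For each such $q$ there are $\phi(q)$ choices of $a$, and on each arc the contribution is $Y^n\omega(Y\theta\nabla f(\x)) + O_N(Y^{-N})$, further truncated to $|\theta|\le Y^{-1+\eta}/\|\nabla f(\x)\|$. Summing the error terms over the $O(Y^{2(1-\eta)})$ arcs is harmless after relabelling $N$.

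Next I would evaluate the main contribution. On a single arc the integral is $Y^n\int \omega(Y\theta\nabla f(\x))\,\d\theta$ over $\theta$ in an interval; since $\omega(\t)=\exp(-\pi\|\t\|^2)$ we have $\omega(Y\theta\nabla f(\x)) = \exp(-\pi Y^2\theta^2\|\nabla f(\x)\|^2)$, whose integral over all of $\RR$ is exactly $1/(Y\|\nabla f(\x)\|)$. The point is that the effective range $|\theta|\le Y^{-1+\eta}/\|\nabla f(\x)\|$ captures all but an exponentially small tail of this Gaussian — the integrand at the endpoint is $\exp(-\pi Y^{2\eta})$ — and likewise the constraint $|q\theta|\le 1/(C_fX^{d-1})$ coming from the definition of the major arcs is no more restrictive than this, using $\|\nabla f(\x)\|\ll X^{d-1}$ from \eqref{eq:grad} and $q\le Y^{1-\eta}$; one checks $Y^{-1+\eta}/\|\nabla f(\x)\|$ is the binding constraint. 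So each arc contributes $Y^{n-1}/\|\nabla f(\x)\| + O_N(Y^{-N})$.

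Then I would sum over arcs: the total is $\bigl(\sum_{q\mid g,\, q\le Y^{1-\eta}}\phi(q)\bigr)Y^{n-1}/\|\nabla f(\x)\| + O_N(Y^{-N})$. If $g\le Y^{1-\eta}$ (more precisely, if the threshold in the definition of $\1(\x)$ is not met, so all divisors of $g$ are admissible up to the $C_f$ slack), then $\sum_{q\mid g}\phi(q) = g$ exactly, giving the stated main term $Y^{n-1}g/\|\nabla f(\x)\|$ with no error beyond $O_N(Y^{-N})$. If instead $g$ is large, i.e.\ $gC_f^2 > Y^{1-\eta}$ so $\1(\x)=1$, then the partial sum $\sum_{q\mid g,\, q\le Y^{1-\eta}}\phi(q)$ lies between $0$ and $g$, which is exactly an error of size $O(g) = O(g\cdot\1(\x))$ relative to the claimed main term; absorbing the factor $Y^{n-1}/\|\nabla f(\x)\|$ this is $O(\1(\x))\cdot Y^{n-1}g/\|\nabla f(\x)\|$, matching the statement. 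The main obstacle is the bookkeeping in this last step: one must be careful that the constant $C_f$ appearing in the arc width and in the $\1(\x)$ threshold is chosen consistently, so that "$q$ admissible" and "$q\mid g$ with $gC_f^2\le Y^{1-\eta}$" align, and that the Gaussian tail estimates genuinely dominate both the truncation from Lemma \ref{lem:Sbeta} and the truncation $|q\theta|\le 1/(C_fX^{d-1})$ built into Definition \ref{def:major}. Everything else is routine Gaussian integration and divisor-sum manipulation.
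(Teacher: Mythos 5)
Your proposal is correct and follows essentially the same route as the paper: reduce via Lemma \ref{lem:Sbeta} to arcs with $q\mid\gcd(\nabla f(\x))$, evaluate the Gaussian integral as $1/(Y\|\nabla f(\x)\|)$ up to negligible tails, and use $(1*\phi)(g)=g$ with the truncated divisor sum and the possibly-narrower arc width both absorbed into the $O(\1(\x))$ term by nonnegativity. The only step the paper makes explicit that you leave implicit is the verification that the relevant arcs are pairwise disjoint (needed to equate the integral over their union with the sum over $(a,q)$), which is a three-line check using that $C_f$ is large.
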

\begin{proof}
Let us set $h=
\gcd(\nabla f(\x))$ throughout the proof.
We  define the modified  major arcs $\widetilde{\mathfrak{M}_\eta}(X,Y)$ to be the set
of $\beta=a/q+\theta\in \mathfrak{M}_\eta(X,Y)$ for which 
$q\mid h$ and $|\theta|\leq Y^{-1+\eta}/\|\nabla f(\x)\|$.
We claim that these modified major arcs are non-overlapping. To see this we suppose that 
$a_1/q_1+\theta_1=a_2/q_2+\theta_2$. Then we may assume without loss of generality that 
$q_1=q_2=h$.  But then it follows that 
$$
|a_1-a_2|= h |\theta_2-\theta_1|\ll X^{d-1} \cdot \frac{1}{C_f X^{d-1}}.
$$
Assuming that $C_f$ is sufficiently large, this implies that $a_1=a_2$, which thereby establishes the claim. (In fact it is not hard to check that the major arcs
$\mathfrak{M}_\eta(X,Y)$ are also disjoint provided that $X^{d-1}\gg Y^{1-\eta}$.)

An application of Lemma~\ref{lem:Sbeta} yields
\begin{align*}
\int_{{\mathfrak{M}_\eta}(X,Y)} S(\beta)\d\beta
&=
\int_{\widetilde{\mathfrak{M}_\eta}(X,Y)} S(\beta)\d\beta+O_N(Y^{-N})\\
&=
Y^n
\sum_{\substack{q\leq Y^{1-\eta}\\ q\mid h }}
\phi(q)
\int_{|\theta|\leq \Theta}
\omega(Y\theta \nabla f(\x)) \d\theta+
O_N(Y^{-N}),
\end{align*}
for any $N\geq 0$,
where
$$
\Theta=\min\left\{
\frac{Y^{-1+\eta}}{\|\nabla f(\x)\|}, ~\frac{1}{qC_f X^{d-1}}\right\}.
$$
Since $(1*\phi)(h)=h$, 
it is clear that 
$$
\sum_{\substack{q\leq Y^{1-\eta}\\ q\mid h }}
\phi(q)=h+O\left(h \1(\x)\right),
$$
where $\1(\x)$ is as in the statement of the lemma.

Next, 
we observe that 
$$
\int_{|\theta|\leq 
Y^{-1+\eta}/\|\nabla f(\x)\|}
\omega(Y\theta \nabla f(\x)) \d\theta=
\frac{1}{Y\|\nabla f(\x)\|} +O_N(Y^{-N}).
$$
Moroever, 
$$
\int_{ \Theta <|\theta|\leq  Y^{-1+\eta}/\|\nabla f(\x)\|}
\omega(Y\theta \nabla f(\x)) \d \theta 
$$
vanishes unless $\Theta <\|\nabla f(\x)\|$, which implies that 
$$
\frac{1}{q C_f X^{d-1} }<  \frac{Y^{-1+\eta}}{\|\nabla f(\x)\|}.
$$
Appealing to 
\eqref{eq:grad} and using the fact that $\|\x\|\sim X$, the right hand side is at most 
$C_f Y^{-1+\eta}/X^{d-1}$, if the constant $C_f$ is taken to be sufficiently large in 
Definition~\ref{def:major}. Hence we conclude that 
$q  C_f^2 >   Y^{ 1-  \eta}    $, which in turn implies that $ h C_f^2> Y^{ 1-  \eta}$ and thus that $\1(\x)=1$ . Because the integrand is nonnegative, the integral over this restricted interval is at most $\frac{1}{Y\|\nabla f(\x)\|}$, so that
\begin{align*}
 \int_{ |\theta| \leq \Theta}  \omega(Y\theta \nabla f(\x)) \d\theta
 =~&
  \int_{|\theta|\leq 
Y^{-1+\eta}/\|\nabla f(\x)\|}
\omega(Y\theta \nabla f(\x)) \d\theta \\
&
- \int_{ \Theta <|\theta|\leq  Y^{-1+\eta}/\|\nabla f(\x)\| }
\omega(Y\theta \nabla f(\x)) \d \theta \\
=~& \frac{1}{Y\|\nabla f(\x)\|} + O \left(\frac{\1(\x) }{Y\|\nabla f(\x)\|}\right) +  O_N(Y^{-N}).\end{align*}
Putting everything together yields the statement of the lemma.
\end{proof}

It is now time to return to our expression  \eqref{eq:MeR} for $M_\ve(R)$. 
First, sticking with the notation $X=R/k$ and $Y=R^{1-\ve}$,  we deduce from 
Lemma \ref{lem:major'} that 
$$
\sum_{\substack{
\x\in \ZZ^n\\
f(\x)=0\\\|\x\|\sim X}}
\mathsf{d}(k\x)
\int_{{\mathfrak{M}_\eta}(X,Y)} S(\beta) \d\beta
=
Y^{n-1}\sum_{\substack{
\x\in \ZZ^n\\
f(\x)=0\\\|\x\|\sim X}}
\mathsf{d}(k\x)
\cdot
 \frac{\gcd(\nabla f(\x))}{\|\nabla f(\x)\|}
\left\{
1+E(\x)
 \right\},
$$
where
$$
E(\x)=O\left( \1(\x)\right)
+
O_N(X^{d-1}Y^{-N})
$$
for any $N\geq 0$. 
If $\1(\x)=1$ then $\gcd(\nabla f(\x))>C_f^{-2}Y^{1-\eta}$. Furthermore, if $\gcd(x_1,\dots,x_n)=\ell$ 
then  \eqref{eq:elimination} yields
 \begin{align*}
 \gcd(\nabla f(\x))=
 \gcd( \ell^{d-1} \nabla f( \x/\ell) )
 &=
  \ell^{d-1} \gcd(\nabla 
f(\x/\ell)) \\
&\ll  \ell^{d-1} ,
\end{align*}
whence in fact $\ell \gg Y^{(1-\eta)/(d-1)}$.
Moreover, we have
$$
\mathsf{d}(k\x)
\cdot
 \frac{\gcd(\nabla f(\x))}{\|\nabla f(\x)\|}\leq k^{d-1}\gcd(\nabla f(\x)) \ll k^{d-1} \ell^{d-1} .
 $$

Assume now that $n>2^{d}(d-1)$. Then it follows from \eqref{eq:birch} that 
\begin{align*}
\#\left\{\x\in \ZZ^n: \begin{array}{l}
f(\x)=0,~\|\x\|\leq X\\ 
\gcd(x_1,\dots,x_n)=\ell
\end{array}
\right\} 
&\ll
 N_V( X/\ell)
 \ll \frac{X^{n-d}}{\ell^{n-d}}.
\end{align*}
Thus
$$
\sum_{\substack{
\x\in \ZZ^n\\
f(\x)=0\\\|\x\|\sim X}}
\mathsf{d}(k\x)
\cdot
 \frac{\gcd(\nabla f(\x))}{\|\nabla f(\x)\|}
E(\x)\ll_N  k^{d-1}\left(
X^{n-d}Y^{-\delta}+
X^{n-1}Y^{-N}\right)
$$
for any $N\geq 0$, where
$$
\delta=\frac{(1-\eta)(n-2d)}{d-1}.
$$ Here the exponent $\delta$ arises from summing the $\ell^{-(n-d) + (d-1)} = \ell^{-(n-2d+1)}$ savings over $\ell \gg Y^{(1-\eta)/(d-1)}$.
Reintroducing the sum over $k$, we now see that 
the overall contribution to 
$M_\ve(R)$ from the set of major arcs $\mathfrak{M}_{\eta,k}=\mathfrak{M}_\eta(R/k,R^{1-\ve})$  is 
\begin{align*}
\sum_{k\leq R}\mu(k)
\hspace{-0.2cm}
\sum_{\substack{
\x\in \ZZ^n\\
f(\x)=0\\\|\x\|\sim R/k}}
\mathsf{d}(k\x)
\int_{\mathfrak{M}_{\eta,k}} 
\hspace{-0.2cm}
S(\beta) \d\beta
&=
R^{(1-\ve)(n-1)}
\hspace{-0.2cm}
\sum_{\substack{
\x\in \ZZp^n\\
f(\x)=0\\\|\x\|\sim R}}1
+O\left(R^{n-d +(1-\ve)(n-1-\delta)}\right),
\end{align*}
on taking $N$ sufficiently large.

Putting 
 $\mathfrak{m}_{\eta,k}=[0,1)\setminus \mathfrak{M}_{\eta,k}$ and 
bringing everything together in 
\eqref{eq:E_eps}, it now follows that 
\begin{align*}
E_{V,\ve}^*(R) 
&\ll R^{n-d-(1-\ve)\delta}+
\left|
\sum_{k\leq R}\mu(k)
\sum_{\substack{
\x\in \ZZ^n\\
f(\x)=0\\\|\x\|\sim R/k}}
\frac{\mathsf{d}(k\x)}{R^{(1-\ve)(n-1)}}
\int_{\mathfrak{m}_{\eta,k}} S(\beta) \d\beta\right|.
\end{align*}
We may detect the equation $f(\x)=0$ in the way most familiar to practitioners of the
Hardy--Littlewood circle method.  On 
doing so, we are led to the following result, which   summarises our discussion of the major arcs.

\begin{lemma}\label{lem:major-final}
Let $n>2^{d}(d-1)$.	
For any $k\in \NN$ let 
 $\mathfrak{m}_{\eta,k}=[0,1)\setminus \mathfrak{M}_{\eta,k}$, where
 $\mathfrak{M}_{\eta,k}=\mathfrak{M}_\eta(R/k,R^{1-\ve})$ is given by Definition \ref{def:major}.
Then there exists $\delta>0$ such that 
$$
E_{V,\ve}^*(R) \ll R^{n-d-\delta}+
\frac{1}{R^{(1-\ve)(n-1)}}
\sum_{k\leq \sqrt{R}}\mu^2(k)
\int_0^1\int_{\mathfrak{m}_{\eta,k}} |S(\alpha,\beta)| \d\alpha\d\beta,
$$
where if $\mathsf{d}(\x)$ is given by \eqref{eq:dx} then
$$
S(\alpha,\beta)=
\sum_{\substack{
\x\in \ZZ^n\\
\|\x\|\sim R/k}}
 \sum_{\y \in \ZZ^n} 
 \mathsf{d}(k\x)
 \omega(\y/R^{1-\ve})  e(\alpha f(\x)+\beta \y.\nabla f(\x)) .
$$
\end{lemma}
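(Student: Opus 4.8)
The plan is to pick up from the bound established immediately before the lemma, namely
$$
E_{V,\ve}^*(R) \ll R^{n-d-(1-\ve)\delta_0}+
\left|\sum_{k\leq R}\mu(k)
\sum_{\substack{\x\in \ZZ^n\\ f(\x)=0\\ \|\x\|\sim R/k}}
\frac{\mathsf{d}(k\x)}{R^{(1-\ve)(n-1)}}
\int_{\mathfrak{m}_{\eta,k}} S(\beta) \,\d\beta\right|,
$$
with $\delta_0=(1-\eta)(n-2d)/(d-1)>0$ and $S(\beta)=\sum_{\y\in\ZZ^n}\omega(\y/R^{1-\ve})e(\beta\y.\nabla f(\x))$, and to recast the second term in the stated form. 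First I would split the $k$-sum at $k=\sqrt R$. In the tail $\sqrt R<k\leq R$ one can afford to be wasteful: bound $|\int_{\mathfrak{m}_{\eta,k}}S(\beta)\,\d\beta|\leq\int_0^1|S(\beta)|\,\d\beta$ and use $|S(\beta)|\leq\sum_{\y\in\ZZ^n}\omega(\y/R^{1-\ve})\ll R^{(1-\ve)n}$, together with $\mathsf{d}(k\x)\leq\|\nabla f(k\x)\|\ll R^{d-1}$ from \eqref{eq:grad} (since $\|k\x\|\ll R$) and $\#\{\x\in\ZZ^n: f(\x)=0,\ \|\x\|\leq R/k\}\ll (R/k)^{n-d}$, which follows from \eqref{eq:birch} because $n>2^d(d-1)$. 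Since $\sum_{k>\sqrt R}(R/k)^{n-d}\ll R^{(n-d+1)/2}$ by comparison with an integral ($n-d\geq 2$), the tail is
$$
\ll \frac{R^{(1-\ve)n}}{R^{(1-\ve)(n-1)}}\,R^{d-1}\sum_{k>\sqrt R}\Big(\frac{R}{k}\Big)^{n-d}\ll R^{1-\ve}\,R^{d-1}\,R^{(n-d+1)/2}=R^{d-\ve+(n-d+1)/2}.
$$
Because $\ve\geq 0$, this exponent is strictly below $n-d$ as soon as $n>3d+1$, which does hold for $d\geq 3$ whenever $n>2^d(d-1)$; hence the tail is $O(R^{n-d-\delta_1})$ for some $\delta_1>0$.

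For the surviving range $k\leq\sqrt R$ I would detect the equation $f(\x)=0$ in the usual circle-method fashion: as $f(\x)\in\ZZ$ for $\x\in\ZZ^n$, we have $\mathbf{1}_{f(\x)=0}=\int_0^1 e(\alpha f(\x))\,\d\alpha$. Inserting this, recalling the definition of $S(\beta)$, and interchanging the finite $\x$-sum with the absolutely convergent $\y$-sum and the two integrals by Fubini, we obtain
$$
\sum_{\substack{\x\in \ZZ^n\\ f(\x)=0\\ \|\x\|\sim R/k}}
\mathsf{d}(k\x)\int_{\mathfrak{m}_{\eta,k}} S(\beta)\,\d\beta
=\int_0^1\int_{\mathfrak{m}_{\eta,k}} S(\alpha,\beta)\,\d\beta\,\d\alpha,
$$
with $S(\alpha,\beta)$ exactly as in the statement — the factor $\mathsf{d}(k\x)$ depends on none of $\alpha,\beta,\y$ and is simply carried along. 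Applying the triangle inequality, writing $|\mu(k)|=\mu^2(k)$, and noting that $R^{n-d-(1-\ve)\delta_0}\ll R^{n-d-(1-c_{d,n})\delta_0}$ since $0\leq\ve<c_{d,n}<1$, I would then collect terms and take $\delta=\min\{\delta_1,(1-c_{d,n})\delta_0\}>0$ to reach the assertion of the lemma.

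The only step carrying real content is the tail estimate for $\sqrt R<k\leq R$: one must check that the crude bounds available there — \eqref{eq:birch} for the point count, \eqref{eq:grad} for $\|\nabla f\|$, and the trivial $O(R^{(1-\ve)n})$ for $S(\beta)$ — survive the $k$-summation and still beat $R^{n-d}$, and this is precisely where the hypothesis $n>2^d(d-1)$ enters (once to license \eqref{eq:birch}, once to force $n>3d+1$). Everything else is formal rearrangement.
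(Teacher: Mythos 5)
Your proof is correct and follows essentially the same route as the paper: the only substantive point is the truncation of the $k$-sum at $\sqrt R$, which you handle by a crude bound on the tail, and the rest is the standard detection of $f(\x)=0$ by $\int_0^1 e(\alpha f(\x))\,\d\alpha$. The paper performs the truncation after introducing $\alpha$, bounding the unconstrained $\x$-sum trivially by $(R/k)^n$ rather than invoking \eqref{eq:birch} for the constrained count $(R/k)^{n-d}$ as you do, but both versions close under the hypothesis $n>2^d(d-1)$.
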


\begin{proof}
The only thing that requires comment is the truncation from $k\leq R$ to $k\leq \sqrt{R}$. But since $\mathsf{d}(k\x)\ll R^{d-1}$ the trivial bound yields
$$
\frac{|S(\alpha,\beta)|}{R^{(1-\ve)(n-1)}}
\ll 
\frac{R^{d-1}\cdot (R/k)^n\cdot R^{(1-\ve)n}}{R^{(1-\ve)(n-1)}}
\ll \frac{R^{n+d-\ve}}{k^n}.
$$
Hence the tail of the $k$-summation makes  a satisfactory contribution.
\end{proof}

\section{Treatment  of the minor arcs}

We begin with a technical result from the geometry of numbers, which generalises the ``shrinking lemma'' that is due to   Davenport \cite[Lemma 12.6]{dav}, and which one recovers by taking $P=Q$ in the following result. 

\begin{lemma}\label{new-geometry} 
Let $\gamma$ be a symmetric $n \times n$ matrix with entries in $\RR$. 
Let $P>0$, let $Q> 2$ and let $\theta \in (0,1]$.
Let $N_{\gamma,P,Q}$ be the number of ${\bf x} \in \ZZ^n$ such that $\| {\bf x} \| < P$ and $\max_{1\leq i\leq n}\langle  \gamma x_i \rangle < Q^{-1}$.  Then
\[ 
\frac{N_{\gamma,P,Q}}{N_{\gamma,\theta P,\theta^{-1} Q}} \ll \theta^{-n} 
\max\left\{\sqrt{\frac{P}{Q}},1\right\}^n , 
\] 
where the implied constant depends only on $n$.
\end{lemma}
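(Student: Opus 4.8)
The plan is to follow Davenport's classical argument for the shrinking lemma, but carrying the two scale parameters $P$ and $Q$ separately rather than coupling them. First I would dispose of the trivial regime: if $P \leq Q$, then the congruence conditions $\langle \gamma x_i\rangle < Q^{-1}$ combined with $\|\x\| < P \leq Q$ force each $\gamma x_i$ to be genuinely close to the nearest integer in a way that behaves linearly, and one expects $N_{\gamma,P,Q}$ to count a box-like set; in this range the bound reduces to the statement that shrinking both the ball (by $\theta$) and relaxing the congruence (by $\theta^{-1}$, but simultaneously shrinking $P$) changes the count by at most $\theta^{-n}$, which is the content of the usual Davenport lemma applied at scale $\theta^{-1}Q \geq \theta^{-1}P \geq \ldots$. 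So the $\max\{\sqrt{P/Q},1\}^n = 1$ case is essentially Davenport verbatim, and I would quote \cite[Lemma 12.6]{dav} or reprove it in a line.

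The substance is the regime $P > Q$, where the extra factor $(P/Q)^{n/2}$ appears. Here I would introduce the lattice (or rather the set) $\Lambda = \{\x \in \ZZ^n : \max_i \langle \gamma x_i\rangle < Q^{-1}\}$ is not a lattice, so instead the standard move is to pass to the lattice $\Gamma \subset \ZZ^{n} \times \ZZ^n$ (or $\ZZ^{2n}$) defined by $\x \in \ZZ^n$ together with the nearest-integer vector $\m \in \ZZ^n$ to $\gamma \x$, equipped with the quadratic form $\|\x\|^2/P^2 + \|Q(\gamma\x - \m)\|^2$ — that is, realize $N_{\gamma,P,Q}$ as the number of lattice points of a fixed lattice $\Gamma_\gamma$ of determinant $1$ inside the convex body $\{\|\x\| < P,\ \|\gamma\x - \m\|_\infty < Q^{-1}\}$. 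Replacing $(P,Q)$ by $(\theta P, \theta^{-1}Q)$ scales this body anisotropically: it contracts by $\theta$ in the $\x$-directions and expands by $\theta$ in the $(\gamma\x - \m)$-directions. The ratio of counts is then controlled, via Minkowski's second theorem in the form \eqref{eq:upper-lower}, by how the successive minima of $\Gamma_\gamma$ interact with this anisotropic rescaling: each of the $n$ ``$\x$-type'' successive minima can cause a loss of $\theta^{-1}$ (total $\theta^{-n}$), and the mismatch between contracting and expanding directions produces the $\sqrt{P/Q}$ per relevant minimum, for a further $(P/Q)^{n/2}$.

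The cleanest way to organize this last step is via the elementary counting inequality: for a lattice $\mathsf L$ of rank $m$ with successive minima $\sigma_1 \leq \cdots \leq \sigma_m$ and a centrally symmetric convex body, $\#(\mathsf L \cap \lambda \mathcal B) \ll \prod_{i}\bigl(1 + \lambda/\sigma_i\bigr)$, and the observation that shrinking a convex body along a coordinate subspace and expanding along the complement can only help along the expanded directions, so the only genuine loss is $\prod_i (1 + \theta P/\sigma_i)/\prod_i(\ldots)$ type ratios, each factor bounded by $\theta^{-1}\max\{1,\ldots\}$. Summing the exponents and using $\det(\Gamma_\gamma) = 1$ to bound $\prod \sigma_i$ from below, one arrives at $\theta^{-n}\max\{\sqrt{P/Q},1\}^n$. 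I expect the main obstacle to be purely bookkeeping: setting up the auxiliary lattice $\Gamma_\gamma$ in $\ZZ^{2n}$ with the right (anisotropic, $P$- and $Q$-dependent) inner product so that both $N_{\gamma,P,Q}$ and $N_{\gamma,\theta P, \theta^{-1}Q}$ are honestly lattice-point counts in rescalings of one fixed body, and then tracking which successive minima are hit by the contraction versus the dilation — the inequality itself then falls out of \eqref{eq:upper-lower}. No deep input beyond Minkowski's second theorem is needed; the generalization over Davenport is entirely in keeping the two scales independent.
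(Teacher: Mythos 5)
Your setup is the right one --- the paper likewise passes to the $2n$-dimensional lattice recording $(\x,\m)$ with an anisotropic $(P,Q)$-dependent normalisation, and then compares the two counts via Davenport's estimate $N \asymp \prod_{i=1}^{2n}\max\{1,R_i^{-1}\}$ in terms of the successive minima (this is \cite[Lemma~12.4]{dav}, which requires folding $P$ and $Q$ into the lattice so that both counts are counts in \emph{dilates} of one fixed ball, not in anisotropic rescalings of a body as you have written it). But the central quantitative step is missing from your plan. Minkowski's second theorem, i.e.\ \eqref{eq:upper-lower}, only controls the \emph{product} $\prod_i R_i \asymp \det = (Q/P)^n$; it does not prevent the worst case in which all $2n$ minima are $<\theta$, and in that case the term-by-term ratio is $\theta^{-2n}$, which is strictly worse than the claimed $\theta^{-n}\max\{\sqrt{P/Q},1\}^n$ whenever $\theta<\sqrt{Q/P}$. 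There is also no well-defined notion of the ``$n$ $\x$-type successive minima'' of the lattice: minimal vectors need not respect the coordinate splitting, so the bookkeeping you defer cannot be carried out as described.

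What is actually needed --- and what the paper proves --- is the individual lower bound $R_{n+1}\gg\sqrt{Q/P}$, so that exactly $n$ of the $2n$ factors are bounded by $\max\{\sqrt{P/Q},1\}$ rather than $\theta^{-1}$. This comes from observing that the rescaled dual lattice $(Q/P)\Lambda_{P,Q}^{-t}$ is conjugate to $\Lambda_{P,Q}$ by the integral symplectic swap $\bigl(\begin{smallmatrix}0&I\\-I&0\end{smallmatrix}\bigr)$, which combined with the transference bound \eqref{eq:*} gives $R_iR_{2n+1-i}\asymp Q/P$ and hence $R_{n+1}\gg\sqrt{Q/P}$ at $i=n+1$. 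This conjugation is exactly where the hypothesis that $\gamma$ is \emph{symmetric} enters; your proposal never uses that hypothesis, which is a reliable sign that the decisive idea is absent. (Your ``trivial regime'' $P\le Q$ is not Davenport verbatim either --- \cite[Lemma~12.6]{dav} is the case $P=Q$ --- but it does follow from the same duality argument, since then $R_{n+1}\gg 1$ and the top $n$ factors are $O(1)$.)
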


\begin{proof}
We may assume that $P\geq 1$, since the left hand side is $1$ when $P<1$.
Define the matrix
\[ \Lambda_{P,Q} = \begin{pmatrix} P^{-1} I_n & 0 \\ Q \gamma & Q I_n \end{pmatrix} ,
\] 
so that 
\[ 
\Lambda_{P,Q}^{-t} = \begin{pmatrix} P I_n & 
-P
\gamma \\ 0 & Q^{-1} I_n \end{pmatrix} \quad 
\text{ and } \quad 
\frac{Q}{P}
\Lambda_{P,Q}^{-t} 
= 
\begin{pmatrix} 0 & I_n \\ - I_n & 0 \end{pmatrix} \Lambda_{P,Q}  \begin{pmatrix} 0 & I_n \\ - I_n & 0 \end{pmatrix} ^{-1} .\] 
Let $R_1\leq  \dots \leq R_{2n}$ denote the successive minima of the lattice corresponding to $\Lambda_{P,Q}$ and let 
 $R_1^*\leq  \dots \leq R_{2n}^*$ be the successive minima of the dual lattice corresponding to $(Q/P)\Lambda_{P,Q}^{-t}$. Then  \eqref{eq:*} implies that  
$
R_i^*\asymp (Q/P)/R_{2n-i+1},
$
for $1\leq i\leq 2n$.
Since the lattices are equal up to left and right multiplication by a matrix in $\mathrm{GL}_{2n}(\ZZ)$, 
we must have  
$$
R_i\asymp \frac{Q/P}{ R_{2n+1-i}}
$$ 
for all $1\leq i\leq 2n$. Taking $i=n+1$ we deduce that 
$
\sqrt{Q/P} \ll R_{n+1}.
$ 

Since $Q>2$, 
the quantity  $N_{\gamma,P,Q}$ is equal to the number of vectors in the lattice corresponding to $\Lambda_{P,Q}$ whose first $n$ entries form a vector of Euclidean norm $<1$ and whose last $n$ entries are individually $<1$.  Thus it is bounded below by the number of vectors with Euclidean norm $<1$, and bounded above by the corresponding number with Euclidean norm $<\sqrt{n+1}$. 
On the other hand,  $N_{\gamma,\theta P,\theta^{-1}Q}$ is bounded below by the number 
of vectors in the lattice corresponding to 
$\Lambda_{P,Q}$ 
with  norm $<\theta$ and above by the corresponding  number with norm $<\theta \sqrt{2n+1}$.
It therefore follows from Davenport
%\footnote{This reference seems to assume WLOG that the determinant of the lattice is $1$; I wonder if we should say more.. \will{If they say ``assume WLOG" then they are implicitly making the argument you need to show that assumption is fine. Since that argument is really easy I think it's OK to leave it implicit and thus to rely on them.}} 
\cite[Lemma~12.4]{dav} that 
$$
N_{\gamma,P,Q} \asymp \prod_{i=1}^{2n} \max\{1, R_{i}^{-1}\}\quad 
\text{ and 
}\quad N_{\gamma,\theta P,\theta^{-1}Q}\asymp \prod_{i=1}^{2n}  \max\left\{1, \theta R_i^{-1}\right\},
$$ 
where the implied constants depend only on $n$.
Dividing term by term, we see that each $i$ contributes at most $\theta^{-1}$ and each $i\geq n+1$ contributes at most $\max\{\sqrt{P/Q},1\}$. Thus the total contribution is at most 
$\theta^{-n} 
\max\left\{\sqrt{P/Q},1\right\}^n$, as claimed in the statement of the lemma.  
\end{proof}

The second technical result required is a simple Diophantine approximation result 
due to Heath-Brown \cite[Lemma 2.3]{14}.

\begin{lemma}\label{23-approx}
Let $M,R>0$.
Let $m\in \ZZ$ such that $|m|\leq M$ and let 
$\alpha=a/q+z$, with coprime integers $a,q$  and  $z\in \RR$, 
such that $\langle \alpha m\rangle <R^{-1}$.
Assume that 
$$
|z|\leq (2qM)^{-1}, \quad q\leq R/2 \quad \text{ and } \quad 
q>\min\{M, (|z|R)^{-1}\}.
$$
Then $m=0$.  
\end{lemma}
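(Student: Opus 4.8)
The statement to prove is: under the hypotheses $|m|\le M$, $\alpha = a/q + z$ with $\gcd(a,q)=1$, $\langle \alpha m\rangle < R^{-1}$, $|z|\le (2qM)^{-1}$, $q\le R/2$, and $q > \min\{M, (|z|R)^{-1}\}$, one must have $m=0$. The plan is to argue by contradiction: suppose $m\ne 0$, and derive information on $\langle \alpha m\rangle$ from the two competing sources — the rational approximation $a/q$ forces $\langle \alpha m\rangle$ to be reasonably large unless $q\mid m$, while the size constraint $q>M\ge|m|$ would preclude $q\mid m$ for nonzero $m$; alternatively, when $q\le M$, the other branch of the $\min$ takes over and we instead play off $|z|$ against $R^{-1}$.

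First I would split into the two cases according to which term achieves $\min\{M, (|z|R)^{-1}\}$. \emph{Case 1: $q > M$.} Then $0<|m|\le M<q$, so $q\nmid m$ and in particular $am\not\equiv 0\pmod q$, giving $\langle am/q\rangle \ge 1/q$. On the other hand $|zm|\le |z|M \le (2q)^{-1}$, so by the triangle inequality for $\langle\cdot\rangle$ we get $\langle \alpha m\rangle = \langle am/q + zm\rangle \ge \langle am/q\rangle - |zm| \ge 1/q - 1/(2q) = 1/(2q) \ge 1/R$, using $q\le R/2$. This contradicts $\langle \alpha m\rangle < R^{-1}$. \emph{Case 2: $q \le M$, so that $q > (|z|R)^{-1}$, i.e. $|z|qR > 1$.} Here $q$ may divide $m$, so the $a/q$ part contributes nothing, and one must extract a lower bound for $\langle \alpha m\rangle$ purely from $zm$. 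If $q\mid m$ write $m = q m'$ with $m'\ne 0$; then $\alpha m = am' + zqm'$, so $\langle \alpha m\rangle = \langle zqm'\rangle$. We have $|zqm'| \ge |zq| > R^{-1}$ from the case hypothesis, while from $|z|\le (2qM)^{-1}$ and $|m'| = |m|/q \le M/q$ we get $|zqm'| \le |z| q \cdot (M/q) = |z|M \le 1/(2q) \le 1/2$, so $zqm'$ lies in a range where $\langle zqm'\rangle = |zqm'|$ (it has absolute value in $(R^{-1}, 1/2]$, hence is genuinely within distance $<1/2$ of $0$ and strictly more than $R^{-1}$ from every integer near it — one checks $|zqm'|>R^{-1}$ directly, and $|zqm'|\le 1/2$ ensures the nearest integer is $0$). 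Thus $\langle\alpha m\rangle = |zqm'| > R^{-1}$, again contradicting the hypothesis. If instead $q\nmid m$, the argument of Case 1 applies verbatim: $\langle am/q\rangle\ge 1/q$ and $|zm|\le |z|M\le 1/(2q)$ give $\langle\alpha m\rangle\ge 1/(2q)\ge 1/R$, a contradiction.

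The only mildly delicate point — and the step I expect to need the most care — is the bookkeeping in Case 2 when $q\mid m$: one must confirm that $zqm'$ is small enough in absolute value ($\le 1/2$) that its distance to the nearest integer equals its own absolute value, yet the case hypothesis $|zq|>R^{-1}$ still forces that absolute value above $R^{-1}$; since $R>2$ forces $R^{-1}<1/2$, these two facts are compatible and yield the contradiction. Collecting the cases, in every scenario the assumption $m\ne0$ contradicts $\langle\alpha m\rangle<R^{-1}$, so $m=0$, as claimed. $\square$
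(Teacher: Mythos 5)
Your proof is correct. The paper does not actually prove this lemma---it is quoted from Heath-Brown's \emph{Cubic forms in 14 variables}, with only the remark that the coprimality of $a$ and $q$ is needed---and your two-case argument (using $\langle am/q\rangle\geq 1/q$ together with $|zm|\leq 1/(2q)$ when $q\nmid m$, and the bound $1/R<|zqm'|\leq 1/2$ when $q\mid m$) is precisely the standard argument behind Heath-Brown's statement, correctly exploiting the coprimality hypothesis exactly where it is required.
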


The statement of this result requires the assumption that $a$ and $q$ are coprime, which isn't formally stated in \cite[Lemma 2.3]{14} but is implicit in the proof.

We now have the tools in place to study our exponential sum on the minor arcs.
Let us set
\begin{equation}\label{eq:XY}
X=R/k \quad \text{ and } \quad Y=R^{1-\ve},
\end{equation}
as previously. 
We want to study
$$
S(\alpha,\beta)=
\sum_{\substack{
\x\in \ZZ^n\\
\|\x\|\sim X}}
 \sum_{\y \in \ZZ^n} 
 \mathsf{d}(k\x)
 \omega(\y/Y)  e(g(\x,\y)),
$$
for $(\alpha, \beta)\in [0,1)\times \mathfrak{m}_{\eta,k}$, 
where
$\mathsf{d}(\x)$ is given by \eqref{eq:dx} and 
 $$
 g(\x,\y)=\alpha f(\x)+\beta \y.\nabla f(\x).
 $$
 Let us write $\widetilde\Delta_f=\Delta_f/\gcd(k^{d-1},\Delta_f)$.
 Then we have 
 \[  \mathsf{d}(k\x) = \frac{ \| \nabla f(k\x) \| }{ \gcd(\nabla f(k \x), \Delta_f ) } = \frac{k^{d-1}\|\nabla f(\x)\|}{\gcd(k^{d-1},\Delta_f)\gcd(\nabla f(\x),\widetilde \Delta_f)}, \] 
 whence
\begin{equation}\label{eq:rain}
S(\alpha,\beta)=
\frac{k^{d-1}}{\gcd(k^{d-1},\Delta_f)} 
\sum_{\substack{
\x\in \ZZ^n\\
\|\x\|\sim X}}
 \sum_{\y \in \ZZ^n} 
\frac{\|\nabla f(\x)\|}{\gcd(\nabla f(\x),\widetilde \Delta_f)}
 \omega(\y/Y)  e(g(\x,\y)).
\end{equation}

 In this section an important role will be played by the  multilinear forms
$$
m_j(\x^{(1)},\dots,\x^{({d-1})})
=
d! 
\sum_{j_1,\dots,j_{d-1}=1}^n c_{j_1,\dots,j_{d-1},j} 
x_{j_1}^{(1)}\dots x_{j_{d-1}}^{(d-1)},
$$
for $1\leq j\leq n$, where 
$c_{j_1,\dots,j_d} \in\ZZ$  are the
symmetric coefficients such that 
$$
f(\x)
=\sum_{j_1,\dots,j_d=1}^n c_{j_1,\dots,j_d} 
x_{j_1}\dots x_{j_d}.
$$
In what follows we shall write 
$\underline{\u}$ to denote the vector $(\u_1,\dots,\u_{d-1})$.
Since $f$ is non-singular, it follows from 
\cite[Lemmas 3.1 and 3.3]{birch} that 
\begin{equation}\label{eq:rogan}
\#\left\{
 \underline{\u}\in \ZZ^{(d-1)n}:
\begin{array}{l}
 \|\u_1\|,\dots, \|\u_{d-1}\|<U\\
 m_j(\u_1,\dots,\u_{d-1}) =0 ~\forall j\leq n
\end{array}
\right\}\ll 1+ U^{(d-2)n}
\end{equation}
for any $U> 0$.
Next, 
for given $P>0, Q> 2$ and $\tau\in \RR$, let  
 \begin{equation}\label{eq:PQ}
 \mathcal{M}(\tau;P,Q)=\#\left\{
 \underline{\u}\in \ZZ^{(d-1)n}:
\begin{array}{l}
 \|\u_1\|,\dots, \|\u_{d-1}\|<P\\
 \left\langle 
\tau m_j(\u_1,\dots,\u_{d-1}) 
\right\rangle<Q^{-1} ~\forall j\leq n
\end{array}
\right\}.
\end{equation}
It follows from $d-1$  applications of Lemma \ref{new-geometry} that
\begin{equation}\label{eq:shrink}
 \mathcal{M}(\tau;P,Q)\ll 
 \frac{\max\{\sqrt{P/Q}, 1\}^{(d-1)n}}{\theta^{(d-1)n}}
 \mathcal{M}(\tau;\theta P,\theta^{1-d}Q),
\end{equation}
for any $\theta\in (0,1]$. 
%\footnote{\will{ In the previous paper, we avoided the $(d-1)$ in the exponent of the numerator here by assuming that $\theta$ was large enough that the $\max\{\sqrt{P/Q}, 1\}^n$ term appears only once. We could do that here and it would probably lead to a better bound - not sure if it's worth doing. We can probably unilaterally replace the $d-1$ by $2$, or something like that, by applying the trivial bound for the first few shrinkings.}}

 Returning to the expression for $S(\alpha,\beta)$ in \eqref{eq:rain}, we start by removing the factor $\gcd(\nabla f(\x),\widetilde\Delta_f)$ via the observation that $\gcd(\nabla f(\x),\widetilde\Delta_f)$ depends only on $\x$ mod $\widetilde\Delta_f$. 
Letting $h= \widetilde\Delta_f$ for compactness of notation, 
 we break the sum into residue classes mod $h$, getting 
 \begin{equation}\label{eq:attic}
S(\alpha,\beta)=
\frac{k^{d-1}}{\gcd(k^{d-1},\Delta_f)} 
\sum_{\y\in \ZZ^n}  \omega(\y/Y) 
\sum_{\substack{\bxi\in (\ZZ/h\ZZ)^n
}} \frac{1}{\gcd(\nabla f(\bxi), h)}
T(\y),
\end{equation}
where
$$
T(\y)=
\sum_{\substack{
 \x\in \ZZ^n\\
\|\x\|\sim X\\
\x\equiv \bxi \bmod{h}
}}
\|\nabla f(\x)\|
e(g(\x,\y)).
$$

We may write
$$
T(\y)=\sum_{\x\in \ZZ^n} F(\x) e(G(\x)), 
$$
where
$$
F(\x)=\begin{cases}
\|(\nabla f)(\bxi+h\x)\| &\text{ if $\|\bxi+h\x\| \sim X$,}\\
0 & \text{ otherwise,}
\end{cases}
$$
and 
$$
 G(\x)=\alpha f(\bxi+h\x)+\beta \y.(\nabla f)(\bxi+h\x).
 $$
 Note that $G(\x)$ has degree $d$. 
 
 We shall estimate $T(\y)$ via Weyl differencing,
as in Birch \cite{birch}. 
  Let 
 \begin{align*}
 F_{\u_1}(\x)&=F(\x+\u_1)F(\x),\\
 F_{\u_1,\u_2}(\x)&=F(\x+\u_1+\u_2)F(\x+\u_1)F(\x+\u_2)F(\x),\\ 
&\hspace{0.25cm}\vdots
 \end{align*}
and 
 \begin{align*}
 G_{\u_1}(\x)&=G(\x+\u_1)-G(\x),\\
 G_{\u_1,\u_2}(\x)&=G(\x+\u_1+\u_2)-G(\x+\u_1)-G(\x+\u_2)+G(\x),\\ 
&\hspace{0.25cm}\vdots
 \end{align*}
Then, for any $r\in \{1,\dots,d-1\}$, we have 
\begin{equation}\label{eq:weyl}
\left| \frac{T(\y)}{X^{n}}\right|^{2^r} \ll \frac{1}{X^{rn} } \sum_{\|\u_1\|<X} \dots
\sum_{\|\u_r\|<X}  \left|\frac{T_{\u_1,\dots,\u_r}(\y)}{X^n}\right|,
\end{equation}
where
$$
T_{\u_1,\dots,\u_r}(\y)=\sum_{\x\in \ZZ^n} 
F_{\u_1,\dots,\u_r}(\x) e(G_{\u_1,\dots,\u_r}(\x)).
$$
We shall produce two estimates for $T(\y)$. 
In the first we  take $r=d-1$, which  eliminates the effect of the lower degree term $\beta \y.(\nabla f)(\bxi+h\x)$ and leads  to a family of linear exponential sums that depend on the Diophantine approximation properties of $\alpha$ alone.
Alternatively, we take $r=d-2$. After  a further application of Cauchy--Schwarz, one  brings the $\y$-sum inside, thereby bringing the Diophantine properties of $\beta$ into play.

By Dirichlet's approximation theorem  there exist $a,q\in \ZZ$ and $\psi\in \RR$ such that 
$$
\alpha=\frac{a}{q}+\psi,
$$
with 
\begin{equation}\label{eq:dirichlet1}
\gcd(a,q)=1, \quad
0\leq a<q\leq X^{d/2} \quad \text{ and } \quad |\psi|\leq \frac{1}{qX^{d/2}}.
\end{equation}
The following is our first bound for $S(\alpha,\beta)$ and only involves the Diophantine approximation properties of $\alpha$.

\begin{lemma}\label{lem:5.1}
Assume that $\alpha=a/q+\psi$  is such that \eqref{eq:dirichlet1} holds and put
$$
D=\frac{n}{2^{d-1}(d-1)}.
$$
Then 
$$
S(\alpha,\beta)\ll \frac{
 k^{d-1} X^{n+d-1} Y^{n}(\log X)^n}{ q^D}
\min \left\{ 1, \frac{1}{|\psi| X^d}\right\}^{D}.
$$
\end{lemma}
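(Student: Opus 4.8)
The plan is to estimate $S(\alpha,\beta)$ by Weyl differencing the inner sum $T(\y)$ exactly $r=d-1$ times, which kills the linear-in-$\y$ term $\beta\,\y\cdot(\nabla f)(\bxi+h\x)$ entirely and leaves a sum whose oscillation is governed solely by the $d$-fold derivative of $\alpha f(\bxi+h\x)$. Concretely, applying \eqref{eq:weyl} with $r=d-1$ to $T(\y)$, the phase $G_{\u_1,\dots,\u_{d-1}}(\x)$ becomes linear in $\x$ with coefficients $\alpha h^{d}\, m_j(\u_1,\dots,\u_{d-1})+(\text{terms independent of }\x)$, because the $(d-1)$-st difference of a degree-$d$ form is linear and the lower-degree piece $\beta\,\y\cdot\nabla f$ has degree $d-1$ and is annihilated. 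The weight $F_{\u_1,\dots,\u_{d-1}}(\x)$ is a product of $2^{d-1}$ translates of $\|(\nabla f)(\bxi+h\x)\|$, each $\ll X^{d-1}$ and supported on $\|\x\|\ll X$, so it contributes a factor $\ll X^{(d-1)2^{d-1}}$ times a box of volume $\ll X^n$. Summing the resulting linear exponential sum in $\x$ geometrically gives a saving governed by $\prod_j \min\{X,\langle \alpha h^d m_j(\underline\u)\rangle^{-1}\}$.

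After this, I would collect the bound over the residue classes $\bxi$ (only $O_f(1)$ of them, each contributing a bounded constant since $\gcd(\nabla f(\bxi),h)\geq 1$) and over $\y$ (the Gaussian weight $\omega(\y/Y)$ sums to $\ll Y^n$, with no remaining $\y$-dependence after the differencing has removed the linear term — this is the crucial point that makes the first bound $\beta$-free). One is then left, after taking $2^{d-1}$-th roots in \eqref{eq:weyl}, with
$$
|S(\alpha,\beta)| \ll k^{d-1} X^{n+d-1} Y^n \left( \frac{1}{X^{(d-1)n}} \sum_{\|\u_1\|,\dots,\|\u_{d-1}\|<X} \prod_{j=1}^n \min\left\{X, \frac{1}{\langle \alpha h^d m_j(\underline\u)\rangle}\right\} \right)^{1/2^{d-1}},
$$
up to harmless constants depending on $f$. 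The exponent $D=n/(2^{d-1}(d-1))$ is exactly what comes out: the sum over $\underline\u$ has $(d-1)n$ variables, and a standard counting argument (cf.\ Birch \cite[Lemma 3.1]{birch} together with \eqref{eq:rogan}) shows that the count of $\underline\u$ for which all $n$ forms $m_j(\underline\u)$ are simultaneously $\langle\alpha h^d m_j\rangle$-small is controlled by $q^{-1}\min\{1,(|\psi|X^d)^{-1}\}$ per form relative to the trivial count $X^{(d-1)n}$, giving an $n$-fold product of such savings inside the $2^{d-1}$-th root.

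The main obstacle — and the step requiring real care rather than bookkeeping — is the passage from the averaged sum $\frac{1}{X^{(d-1)n}}\sum_{\underline\u}\prod_j\min\{X,\langle\alpha h^d m_j(\underline\u)\rangle^{-1}\}$ to the clean bound $q^{-D}\min\{1,(|\psi|X^d)^{-1}\}^{D}$. This is precisely the point where the hypothesis \eqref{eq:rogan} (non-singularity of $f$, via Birch's bound on the number of $\underline\u$ with all $m_j$ vanishing) is used: one partitions the $\underline\u$-range according to the size of each $\langle\alpha h^d m_j(\underline\u)\rangle$, and for each dyadic level the number of $\underline\u$ landing in it is bounded using \eqref{eq:rogan} applied to a slightly dilated/perturbed system. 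One must track the congruence $\x\equiv\bxi\bmod h$ (harmless: it rescales $X$ by $h$, a constant) and the Dirichlet data \eqref{eq:dirichlet1} (which controls $q$ and $\psi$). The factor $h^d$ in front of $m_j$ is a fixed constant, so it can be absorbed into the implied constants. The extra $(\log X)^n$ in the statement is the standard loss from the $\min$ over $n$ coordinates when summing the geometric-series tails. Once the counting estimate is in hand, inserting it, taking the $2^{d-1}$-th root, and multiplying back the prefactors $k^{d-1}X^{n+d-1}Y^n$ yields the lemma.
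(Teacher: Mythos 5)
Your overall architecture matches the paper's: Weyl differencing $T(\y)$ exactly $d-1$ times so that the degree-$(d-1)$ term $\beta\,\y\cdot\nabla f$ is annihilated, bounding the weight $F_{\u_1,\dots,\u_{d-1}}$ by $X^{2^{d-1}(d-1)}$ via partial summation and \eqref{eq:grad}, converting the product of minima into the counting function $\mathcal{M}(\alpha h^d;X,X)$ at the cost of $(\log X)^n$, and then summing trivially over $\y$ and the $O_f(1)$ classes $\bxi$. Up to that point the proposal is sound and essentially identical to the paper.

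The gap is in the one step you yourself flag as requiring real care: passing from $\mathcal{M}(\alpha h^d;X,X)/X^{(d-1)n}$ to $q^{-n/(d-1)}\min\{1,(|\psi|X^d)^{-1}\}^{n/(d-1)}$. Your sketch (``dyadic partition of the sizes of $\langle\alpha h^d m_j(\underline{\u})\rangle$, then \eqref{eq:rogan} applied to a dilated/perturbed system'') is not an argument: \eqref{eq:rogan} counts exact common zeros of the $m_j$, and the whole difficulty is converting the condition ``$\langle\alpha h^d m_j(\underline{\u})\rangle$ small'' into ``$m_j(\underline{\u})=0$''. The paper does this with the shrinking lemma (Lemma \ref{new-geometry}, i.e.\ \eqref{eq:shrink}) for a carefully chosen $\theta$ with $\theta^{d-1}\asymp\min\{1,(|q\psi|X^{d-1})^{-1},X/q,\max\{q/X^{d-1},|q\psi|X\}\}$, followed by Heath-Brown's Diophantine lemma (Lemma \ref{23-approx}) to force $m_j(\underline{\u})=0$ on the shrunken range, and only then \eqref{eq:rogan}; the Dirichlet conditions \eqref{eq:dirichlet1} are needed to simplify the resulting maximum. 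Moreover your quantitative claim is internally inconsistent: a saving of ``$q^{-1}\min\{1,(|\psi|X^d)^{-1}\}$ per form'', i.e.\ an $n$-fold product, would give exponent $n/2^{d-1}$ after the $2^{d-1}$-th root, not the stated $D=n/(2^{d-1}(d-1))$. The true saving per form is only the $(d-1)$-th root of what you claim — the factor $1/(d-1)$ enters because $\theta$ appears to the power $d-1$ in the shrinking lemma — and this weaker exponent is exactly why the lemma's $D$ carries the $1/(d-1)$. As written, the key estimate is asserted rather than proved, and asserted with the wrong strength.
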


\begin{proof}
Taking $r=d-1$ in \eqref{eq:weyl}, we first note that 
$$
G_{\u_1,\dots,\u_{d-1}}(\x)=\alpha h^{d}\sum_{j=1}^n x_j m_j(\u_1,\dots,\u_{d-1}) +H(\u_1,\dots,\u_{d-1}),
$$
for some polynomial $H(\u_1,\dots,\u_{d-1})$ that doesn't depend on $\x$. 
It follows that 
$$
\sum_{\substack{\x\in \ZZ^n\\
-cX\leq x_j\leq t_j}}e(G_{\u_1,\dots,\u_r}(\x))\ll \prod_{j=1}^n \min\left\{X, \langle
\alpha h^{d} m_j(\u_1,\dots,\u_{d-1}) \rangle^{-1}
 \right\},
$$
for any $t_1,\dots,t_n\ll X$ and any absolute constant $c>0$.
Exploiting  \eqref{eq:grad},  it readily follows from multi-dimensional partial summation that 
$$
T_{\u_1,\dots,\u_r}(\y)
\ll X^{2^{d-1}(d-1)}
\prod_{j=1}^n \min\left\{X, \langle
\alpha h^{d} m_j(\u_1,\dots,\u_{d-1}) \rangle^{-1}
 \right\}.
$$
In the standard way  (cf.\ the proof of \cite[Lemma 13.2]{dav})
one finds that 
$$
\left| \frac{T(\y)}{X^{n}}\right|^{2^{d-1}} \ll X^{(d-1)(2^{d-1}-n)}(\log X)^n
\mathcal{M}(\alpha h^{d}; X,X),
$$
in the notation of \eqref{eq:PQ}. Applying 
\eqref{eq:shrink} we obtain
$$
\frac{\mathcal{M}(\alpha h^{d}; X,X)}{X^{(d-1)n}}
\ll  
\frac{\mathcal{M}(\alpha h^{d}; \theta X,\theta^{1-d}X)}{(\theta X)^{(d-1)n}},
$$
for any $\theta\in (0,1]$. By choosing 
$\theta$ to satisfy
$$
\theta^{d-1}\asymp  \min\left\{
1, \frac{1}{|q\psi| X^{d-1}}, \frac{X}{q}, \max\left\{
\frac{q}{X^{d-1}}, |q\psi|X
\right\}\right\},
$$
for appropriate implied constants depending on $f$, 
we can  make 
Lemma \ref{23-approx} applicable. We then deduce from  \eqref{eq:rogan}
that
\begin{align*}
\frac{\mathcal{M}(\alpha h^{d}; X,X)}{X^{(d-1)n}}
&\ll
\max\left\{
\frac{1}{X^{d-1}}, q |\psi| , \frac{q}{X^d}, \min \left\{ \frac{1}{q}, \frac{1}{q|\psi| X^d}\right\}
\right\}^{n/(d-1)}\\
&\ll  \frac{1}{q^{n/(d-1)}}
\min \left\{ 1, \frac{1}{|\psi| X^d}\right\}^{n/(d-1)},
\end{align*}
since  \eqref{eq:dirichlet1} holds. 
It follows that 
$$
T(\y)
\ll \frac{X^{n+d-1}(\log X)^n}{q^D}
\min \left\{ 1, \frac{1}{|\psi| X^d}\right\}^{D},
$$
with $D$ as in the statement of  the lemma.
Substituting this into \eqref{eq:attic}, 
we conclude the proof of the lemma by summing trivially over $\y$ and the finitely many possible values of $\bxi$.
\end{proof}

We now turn to our alternative estimate for $S(\alpha,\beta)$, which is obtained by exploiting the Diophantine approximation properties of $\beta.$ 
By Dirichlet's approximation theorem  there exist $b,r\in \ZZ$ and $\rho\in \RR$ such that 
$$
\beta=\frac{b}{r}+\rho,
$$
with 
\begin{equation}\label{eq:dirichlet2}
\gcd(b,r)=1, \quad
0\leq b<r\leq \sqrt{X^{d-1}Y} \quad \text{ and } \quad |\rho|\leq \frac{1}{r\sqrt{X^{d-1}Y}}.
\end{equation}
We shall prove the following result, which operates under the assumption that $X$ and $Y$ are not too lopsided.

\begin{lemma}\label{lem:5.2}
Assume that $\beta=b/r+\rho$  is such that \eqref{eq:dirichlet2} holds and put 
$$
E=\frac{n}{2^{d-2}(d-1)}.
$$
Assume that $Y\leq X^{d-1}$.
Then 
$$
S(\alpha,\beta)\ll \frac{
 k^{d-1}X^{n+d-1} Y^{n}
\max\{X/Y, 1\}^{(d-1)n/2^{d-1}} 
 (\log X)^n}{ r^{E}}
\min \left\{ 1, \frac{1}{|\rho| X^{d-1}Y}\right\}^{E}.
$$
\end{lemma}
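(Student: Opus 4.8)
The plan is to mirror the proof of Lemma~\ref{lem:5.1}, but now applying Weyl differencing with $r=d-2$ instead of $r=d-1$, so that after $d-2$ steps the exponential sum $T_{\u_1,\dots,\u_{d-2}}(\y)$ still has a term linear in $\x$ coming from the top-degree part of $\alpha f$, but crucially it also retains the term $\beta\,\y.(\nabla f)(\bxi+h\x)$ which is genuinely linear in $\x$ (being a derivative of the linear-in-$\x$ quantity after $d-2$ differences of the degree-$d$ piece combined with the degree-$(d-1)$ piece). More precisely, after taking $r=d-2$ in \eqref{eq:weyl}, the phase $G_{\u_1,\dots,\u_{d-2}}(\x)$ is a quadratic in $\x$ whose linear part involves both $\alpha$ and $\beta$; summing over $\x$ and applying multi-dimensional partial summation together with \eqref{eq:grad} to absorb the weight $F_{\u_1,\dots,\u_{d-2}}(\x)\ll X^{(d-1)2^{d-2}}$ gives a bound in terms of $\prod_j \min\{X,\langle \cdot\rangle^{-1}\}$ where the argument involves a bilinear form in the $\u_i$ and a linear form in $\y$.

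The key new ingredient is the extra Cauchy--Schwarz step to bring the $\y$-sum inside. After substituting the Weyl bound into \eqref{eq:attic} and applying Cauchy--Schwarz in $\y$ (using $\sum_\y \omega(\y/Y)\ll Y^n$), one is left with estimating
\[
\sum_{\y\in\ZZ^n}\omega(\y/Y)\prod_{j=1}^n\min\left\{X,\left\langle \beta h^{d-1} y_{\ell(j)}\,\widetilde m_j(\underline{\u})+\alpha(\cdots)\right\rangle^{-1}\right\},
\]
where the dependence on $\y$ is linear. Because $\omega$ decays, the $\y$-sum effectively ranges over $\|\y\|\ll Y$ (up to negligible tails, as in the passage to \eqref{eq:Sb-1}), and one counts lattice points $(\underline{\u},\y)$ in a box with $d-1$ (or $n$, one for each coordinate $j$) small-denominator conditions on the bilinear forms $\beta h^{d-1}\,(\text{form in }\u_i\text{ and }y_{j})$. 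Invoking \eqref{eq:rogan} for the vanishing case and, for the non-vanishing case, the shrinking Lemma~\ref{new-geometry} (applied $d-1$ times, cf.\ \eqref{eq:shrink}, but now in $(d-1)n+n$ variables with the box of size $\max\{X,Y\}=X$ on the $\u$ variables and $Y$ on the $\y$ variables), followed by Heath-Brown's Lemma~\ref{23-approx} with $M\asymp X^{d-1}Y$ and the Dirichlet data $(b,r,\rho)$ from \eqref{eq:dirichlet2}, yields a saving of $r^{-n/(d-1)}\min\{1,(|\rho|X^{d-1}Y)^{-1}\}^{n/(d-1)}$ per "block". Taking the $2^{d-2}$-th root (from the Weyl inequality) and then the square root (from Cauchy--Schwarz) turns the exponent $n/(d-1)$ into $E/2=n/(2^{d-1}(d-1))$ on the saving and produces the stated power $\max\{X/Y,1\}^{(d-1)n/2^{d-1}}$ coming from the mismatch between the $\u$-box (size $X$) and the $\y$-box (size $Y$) when we homogenise the shrinking lemma to a common scale; the hypothesis $Y\le X^{d-1}$ is exactly what is needed to make the choice of auxiliary parameter $\theta$ in \eqref{eq:shrink} legitimate and to keep $M=X^{d-1}Y$ in the admissible range for Lemma~\ref{23-approx}.

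I expect the main obstacle to be bookkeeping the second Cauchy--Schwarz correctly: one must arrange the differencing so that the $\y$-dependence appears \emph{linearly} inside a product of $\min\{X,\langle\cdot\rangle^{-1}\}$ factors, then recognise the resulting point-count as an instance of $\mathcal M(\tau;P,Q)$-type quantity in the enlarged variable set $(\underline{\u},\y)$, and finally balance the two different box sizes $X$ (for $\u$) and $Y$ (for $\y$) through the shrinking lemma without losing more than the advertised $\max\{X/Y,1\}^{(d-1)n/2^{d-1}}$. A secondary technical point is checking that the error terms from truncating the $\y$-sum (Poisson/rapid decay of $\omega$) and from the residue-class decomposition mod $h=\widetilde\Delta_f$ in \eqref{eq:attic} remain negligible; these are handled exactly as in the proof of Lemma~\ref{lem:5.1} by summing trivially over the $O_f(1)$ classes $\bxi$ and choosing the decay parameter $N$ large. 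Once the point-counting is set up, the endgame — extracting $\min\{1,(|\rho|X^{d-1}Y)^{-1}\}^{E}$ and collecting the powers of $X$, $Y$, $k$, and $\log X$ — runs in complete parallel with Lemma~\ref{lem:5.1}.
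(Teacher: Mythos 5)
Your overall scaffolding (roughly $d-2$ differencing steps, an extra H\"older/Cauchy--Schwarz in $\y$, the shrinking lemma with mismatched boxes, and Heath-Brown's Lemma~\ref{23-approx} applied to the data $(b,r,\rho)$) matches the paper's, but the central mechanism in your write-up does not work as stated. After $d-2$ Weyl differences the phase $G_{\u_1,\dots,\u_{d-2}}(\x)$ is \emph{quadratic} in $\x$ (the $\alpha f$ part has degree $d-(d-2)=2$), so you cannot ``sum over $\x$ and apply multi-dimensional partial summation'' to obtain a product $\prod_j\min\{X,\langle\cdot\rangle^{-1}\}$; that step is only valid for a phase that is linear in the variable being summed, which is exactly why Lemma~\ref{lem:5.1} needs the full $r=d-1$ differences. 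The paper's actual move is the opposite of yours: it first applies H\"older in $\y$ to reduce to $U(\bxi)=\sum_{\|\y\|\ll Y}|T(\y)|^{2^{d-2}}$, performs $d-3$ Weyl steps plus one squaring, and then \emph{executes the $\y$-sum} --- the phase is linear in $\y$ throughout, since $\beta\,\y.\nabla f(\x)$ is linear in $\y$ --- while the quadratic-in-$\u_{d-1}$ term $H(\u_1,\dots,\u_{d-1})$ (which carries all the $\alpha$-dependence) is discarded by taking absolute values in the $\u$-sums. This produces $\prod_j\min\{Y,\langle\beta h^{d-1}(m_j+r_j)\rangle^{-1}\}$ with $Y$, not $X$, in the minimum, and after translating $\u_{d-1}$ to kill the constants $r_j$ one lands on the count $\mathcal M(\beta h^{d-1};2X,Y/2)$ in the $(d-1)n$ variables $\underline\u$ only. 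Your alternative of a joint lattice count over $(\underline\u,\y)$ in $dn$ variables with two different box sizes is not covered by Lemma~\ref{new-geometry} as stated, and you give no substitute for it.

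Two further bookkeeping points. First, the factor $\max\{X/Y,1\}^{(d-1)n/2^{d-1}}$ comes from the $\max\{\sqrt{P/Q},1\}^{(d-1)n}$ term in \eqref{eq:shrink} with $P\asymp X$, $Q\asymp Y$ (i.e.\ from the asymmetry of the count $\mathcal M(\beta h^{d-1};2X,Y/2)$ itself), not from ``homogenising'' a mixed $(\underline\u,\y)$ box. Second, your exponent accounting gives a saving $r^{-E/2}$ rather than the claimed $r^{-E}$: in the paper the bound $r^{-n/(d-1)}$ on $U(\bxi)$ sits inside $|S(\alpha,\beta)/k^{d-1}|^{2^{d-2}}$, so one extracts a single $2^{d-2}$-th root and obtains exactly $E=n/(2^{d-2}(d-1))$; there is no additional square root to take. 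As written, your argument would prove a strictly weaker estimate even if the $\x$-summation step were repaired.
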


\begin{proof}
This time we 
begin through an application of H\"older's inequality in \eqref{eq:attic}.
Recalling that $h=O(1)$,
we deduce that 
\begin{align*}
\left|\frac{S(\alpha,\beta)}{k^{d-1}}\right|^{2^{d-2}}
&\ll
Y^{(2^{d-2}-1)n} 
\sum_{\|\y\|\ll Y}
\left(
\sum_{\substack{\bxi\in (\ZZ/h\ZZ)^n}}
\hspace{-0.2cm}
|T(\y)|\right)^{2^{d-2}}\\
&\ll
Y^{(2^{d-2}-1)n} 
\sum_{\substack{\bxi\in (\ZZ/h\ZZ)^n}}
U(\bxi),
\end{align*}
where
$$
U(\bxi)
=\sum_{\|\y\|\ll Y}
|T(\y)|^{2^{d-2}}.
$$
Taking $r=d-3$ in \eqref{eq:weyl}, it follows that 
\begin{align*}
U(\bxi)
&\ll \sum_{\|\y\|\ll Y}
\left|
 \frac{
 X^{2^{d-3}n} }{X^{(d-3)n} } \sum_{\|\u_1\|<X} \dots
\sum_{\|\u_{d-3}\|<X}  \left|\frac{T_{\u_1,\dots,\u_{d-3}}(\y)}{X^n}\right|\right|^2\\
&\ll
 \frac{
 X^{2^{d-2}n} }{X^{(d-1)n} }
 \sum_{\|\y\|\ll Y}
 \sum_{\|\u_1\|<X} \dots
\sum_{\|\u_{d-3}\|<X}  \left|T_{\u_1,\dots,\u_{d-3}}(\y)\right|^2.
\end{align*}
At this point we carry out a further differencing operation to conclude that
$$
|T_{\u_1,\dots,\u_{d-3}}(\y)|^2=\sum_{\u_{d-2}\in \ZZ^n}\sum_{\u_{d-1}\in \ZZ^n} 
F_{\u_1,\dots,\u_{d-2}}(\u_{d-1}) e(G_{\u_1,\dots,\u_{d-2}}(\u_{d-1})).
$$
There exists a polynomial  $H\in \ZZ[\u_1,\dots,\u_{d-1}]$ that doesn't depend on $\y$ and polynomials $r_1,\dots,r_n\in \ZZ[\u_1,\dots,\u_{d-2}]$ that don't depend on $\u_{d-1}$ such that 
\begin{align*}
G_{\u_1,\dots,\u_{d-2}}(\u_{d-1})=~& \beta h^{d-1}\sum_{j=1}^n y_j \big(m_j(\u_1,\dots,\u_{d-1})+r_j(\u_1,\dots,\u_{d-2})\big)\\ 
\quad&+H(\u_1,\dots,\u_{d-1}),
\end{align*}
%Furthermore, one has $m_j(\u_1,\dots,\u_{d-2},\u_{d-2})=2 r_j(\u_1,\dots,\u_{d-2})$ for $1\leq j\leq n.$
where  $r_j$ may depend on $\bxi$.
We may now interchange the order of summation and  execute the sum over $\y$,
noting that $F_{\u_1,\dots,\u_{d-2}}(\u_{d-1})\ll X^{2^{d-2}(d-1)}$ for $\|\u_i\|< X$.
Hence, in the usual way,  
we  conclude that 
\begin{align*}
U(\bxi)
\ll~&
 \frac{
 X^{2^{d-2}(n+d-1)} }{X^{(d-1)n} }\sum_{\|\u_1\|<X} \dots
\sum_{\|\u_{d-1}\|<X} \\
 &
 \hspace{-0.3cm}
 \times
 \prod_{j=1}^n \min \left\{
Y, \left\langle 
\beta h^{d-1} \left(m_j(\u_1,\dots,\u_{d-1}) +r_j(\u_1,\dots,\u_{d-2})\right)
\right\rangle^{-1}
\right\}\\
\ll~&
 \frac{ X^{2^{d-2}(n+d-1)} Y^n(\log Y)^n}{X^{(d-1)n}}
\widetilde{\mathcal{M}}
\end{align*}
where 
$\widetilde{\mathcal{M}}
$
denotes  the number of 
$ \underline{\u}\in \ZZ^{(d-1)n}$ for which 
$ \|\u_1\|,\dots, \|\u_{d-1}\|<X$ and 
$$
 \left\langle 
 \beta h^{d-1} \left(m_j(\u_1,\dots,\u_{d-1}) +r_j(\u_1,\dots,\u_{d-2})\right)
\right\rangle<Y^{-1} ,
$$
for $1\leq j\leq n$. Note that  $m_j$ is linear when viewed as a polynomial in $\u_{d-1}$. For fixed $\u_1,\dots,\u_{d-2}$, given a single $\u_{d-1}'$ satisfying the inequality, for all other solutions $\u_{d-1}$ we will have 
\begin{align*}  \langle 
 \beta h^{d-1} (m_j &(\u_1,\dots,\u_{d-1} - \u_{d-1}' ) )
\rangle\\  
&=     \left\langle 
 \beta h^{d-1} \left(m_j(\u_1,\dots,\u_{d-1}) +r_j- m_j(\u_1,\dots,\u_{d-1}') - r_j\right)\right\rangle \\ 
 &<2 Y^{-1},     
 \end{align*}  
where $r_j=r_j(\u_1,\dots,\u_{d-2})$.
Thus we may replace  $\u_{d-1}$ by $\u_{d-1}- \u_{d-1}'$ to remove the constant term $r_j(\u_1,\dots,\u_{d-2})$. Doing so 
leads to the conclusion that 
$\widetilde{\mathcal{M}}
$
is at most the  number of vectors
$ (\u_1,\dots,\u_{d-2}, \u_{d-1} )\in  \ZZ^{(d-1)n}$ 
for which 
$ \|\u_1\|,\dots, \|\u_{d-2}\|<X$ and 
$\|\u_{d-1}\|<2 X$, with 
$$
 \left\langle 
 \beta h^{d-1} m_j(\u_1,\dots,\u_{d-1}) 
\right\rangle<2Y^{-1} ,
$$
for $1\leq j\leq n$. We conclude that 
$$
U(\bxi)
\ll
 \frac{ X^{2^{d-2}(n+d-1)} Y^n(\log Y)^n}{X^{(d-1)n}}
 \mathcal{M}(\beta h^{d-1};2X,Y/2),
$$
in the notation of \eqref{eq:PQ}.

Next, on appealing to \eqref{eq:shrink}, we deduce that 
$$
\mathcal{M}(\beta h^{d-1};2 X,Y/2) 
\ll 
 \frac{\max\{\sqrt{X/Y}, 1\}^{(d-1)n}}{\theta^{(d-1)n}}
 \mathcal{M}(\beta h^{d-1};2\theta X,\theta^{1-d}Y/2), 
$$
for any $\theta\in (0,1]$.
Assume that $\beta=b/r+\rho$. 
If we choose 
   $\theta $ so that 
$$
\theta^{d-1}\asymp \max\left\{ \frac{1}{X^{d-1}}, \min\left\{
1, \frac{1}{|r\rho| X^{d-1}}, \frac{Y}{r}, \max\left\{
\frac{r}{X^{d-1}}, |r\rho|Y
\right\}
\right\}\right\},
$$
for appropriate  implied constants that  depend only on $f$, then we can make 
Lemma \ref{23-approx}
applicable. In the light of \eqref{eq:rogan}, this leads to the conclusion that
 $$
 \mathcal{M}(\beta h^{d-1};2X,Y/2) 
\ll 
 \frac{\max\{\sqrt{X/Y}, 1\}^{(d-1)n} (\theta X)^{(d-2)n}}{\theta^{(d-1)n}},
 $$
 since $\theta X\gg 1$.
Thus
\begin{align*}
U(\bxi)
&\ll
 \frac{
 X^{2^{d-2}(n+d-1)} Y^n \max\{\sqrt{X/Y}, 1\}^{(d-1)n}(\log Y)^n}{(\theta X)^{n} }\\
&\ll 
 X^{2^{d-2}(n+d-1)} Y^n \max\{\sqrt{X/Y}, 1\}^{(d-1)n}(\log Y)^n M^{n/(d-1)},
\end{align*}
where 
\begin{align*}
M
&\ll
\max\left\{
\frac{1}{X^{d-1}}, r |\rho| , \frac{r}{YX^{d-1}}, \min \left\{ \frac{1}{r }, \frac{1}{r|\rho| YX^{d-1}}\right\}\right\}.
\end{align*}
Assuming that
\eqref{eq:dirichlet2} holds and $Y\leq X^{d-1}$, it follows that 
$$
M
\ll  \frac{1}{r}
\min \left\{ 1, \frac{1}{|\rho| X^{d-1}Y}\right\},
$$
whence finally
\begin{align*}
U(\bxi)
\ll ~&
 X^{2^{d-2}(n+d-1)} Y^n \max\{\sqrt{X/Y}, 1\}^{(d-1)n}(\log Y)^n
\\
&\times  \frac{1}{r^{n/(d-1)}}
\min \left\{ 1, \frac{1}{|\rho| X^{d-1}Y}\right\}^{n/(d-1)}.
\end{align*}
We deduce by summing over the finitely many possible values of $\bxi$ that 
\begin{align*}
\left|\frac{S(\alpha,\beta)}{k^{d-1}}\right|^{2^{d-2}}
\ll~&
\frac{X^{2^{d-2}(n+d-1)}
Y^{2^{d-2}n} 
\max\{\sqrt{X/Y}, 1\}^{(d-1)n}
(\log Y)^{n}}{r^{n/(d-1)}}\\
& \times
\min \left\{ 1, \frac{1}{|\rho| X^{d-1}Y}\right\}^{n/(d-1)}.
\end{align*}
The  lemma follows since $\log Y\leq (d-1)\log X$.
\end{proof}

We now have everything in place to complete the estimation of 
$E_{V,\ve}^*(R)$ via Lemma \ref{lem:major-final}. 
For the moment we continue to adopt the notation \eqref{eq:XY} for $X$ and $Y$. 
Since $k\leq \sqrt{R}$ in 
Lemma \ref{lem:major-final} we may assume that $Y\leq X^{d-1}$ in Lemma~\ref{lem:5.2}.
Given $Q_i,t_i>0$, let
$\mathfrak{I}(Q_1,Q_2;t_1,t_2)$ denote the overall contribution to the integral 
$$
\int_0^1\int_{\mathfrak{m}_{\eta,k}} |S(\alpha,\beta)| \d\alpha\d\beta
$$
from 
$\alpha=a/q+\psi$ and $\beta=b/r+\rho$ such that
$$
q\sim Q_1, \quad r\sim Q_2 \quad \text{ and } \quad  |\psi|\sim t_1,  \quad |\rho|\sim t_2
.$$
Then it follows that from Lemmas \ref{lem:5.1} and \ref{lem:5.2} that 
\begin{equation}\label{eq:4pm}\begin{split}
\mathfrak{I}(Q_1,Q_2;t_1,t_2)\ll~& k^{d-1}X^{n+d-1} Y^{n}
\max\{X/Y, 1\}^{(d-1)n/2^{d-1}} 
 Q_1^2Q_2^2t_1t_2\\
& \times (\log X)^n\min \left\{ 
 \frac{1}{Q_1^D}, \frac{1}{(Q_1t_1 X^{d})^D}
,\frac{1}{Q_2^E}, \frac{1}{(Q_2t_2X^{d-1}Y)^E}\right\}.
\end{split}\end{equation}
By invoking Dirichlet's approximation theorem twice,  as in 
\eqref{eq:dirichlet1} and \eqref{eq:dirichlet2}, we see that 
we are only interested in $Q_i,t_i>0$ such that 
$$
Q_1\ll X^{d/2}, \quad Q_1t_1\ll \frac{1}{X^{d/2}} \quad \text{ and }
\quad 
Q_2\ll \sqrt{X^{d-1}Y}, \quad Q_2t_2\ll \frac{1}{\sqrt{X^{d-1}Y}} .
$$
Furthermore, since $\beta$ belongs to the minor arcs $\mathfrak{m}_{\eta,k}$ it follows from 
Definition~\ref{def:major} that 
$\mathfrak{I}(Q_1,Q_2;t_1,t_2)=0$ unless 
$$
\max\left\{Q_2,Q_2t_2 X^{d-1}Y\right\}\gg Y^{1-\eta}.
$$
Since there are $O((\log XY)^4)$ possible dyadic values for $Q_i,t_i$ that can contribute, we get an estimate for the minor arc integral by taking a maximum of \eqref{eq:4pm} over all $Q_i,t_i$ satisfying these inequalities.

Taking $\min\{A,B\}\leq A^{2/D}B^{1-2/D}$,
with 
$$
A= \frac{1}{\max\{Q_1, Q_1t_1 X^{d}\}^D} \quad \text{ and }\quad
B=\frac{1}{\max\{Q_2, Q_2t_2X^{d-1}Y\}^E},
$$
and then taking $\max\{1,t_1X^{d}\}^2\geq t_1X^d$, we deduce from \eqref{eq:4pm} that 
\begin{align*}
\mathfrak{I}(Q_1,Q_2;t_1,t_2)\ll~& k^{d-1}X^{n-1} Y^{n}\max\{X/Y, 1\}^{(d-1)n/2^{d-1}} 
(\log X)^n \\
&\times\frac{Q_2^2t_2}{
 \max \left\{ 
Q_2,Q_2t_2X^{d-1}Y \right\}^{E(1-2/D)}}.
\end{align*}
But $2E/D=4$ and 
$
Q_2^2t_2 X^{d-1}Y\leq  \max \left\{ 
Q_2,Q_2t_2X^{d-1}Y \right\}^2.
$
Hence 
\begin{align*}
\mathfrak{I}(Q_1,Q_2;t_1,t_2)
&\ll \frac{k^{d-1}X^{n-d} Y^{n-1}
\max\{X/Y, 1\}^{(d-1)n/2^{d-1}} 
(\log X)^n}{
 \max \left\{ 
Q_2,Q_2t_2X^{d-1}Y \right\}^{E-6}}\\
&\ll
\frac{k^{d-1}X^{n-d} Y^{n-1}\max\{X/Y, 1\}^{(d-1)n/2^{d-1}} 
(\log X)^n}{Y^{(1-\eta)(E-6)}}.
\end{align*}
Note that the exponent of $Y$ in the denominator is strictly positive precisely when $n>3(d-1)2^{d-1}$.
Recalling that $X$ and $Y$ are given by \eqref{eq:XY} we insert this argument into Lemma 
\ref{lem:major-final} to deduce that 
$$
E_{V,\ve}^*(R)\ll R^{n-d-\delta}
$$
for some $\delta>0$, provided that $\ve$ is sufficiently small in terms of $d$ and $n$.  This completes the proof of Theorem \ref{t:P}, on summing over dyadic intervals in \eqref{eq:B-R}.

We can get an explicit value of the constant $c_{d,n}$ as follows. 
Since $Y \geq  X^{ 1-\ve} $ in \eqref{eq:XY},  we see that 
\[ \frac{X^{n-d} Y^{n-1}\max\{X/Y, 1\}^{(d-1)n/2^{d-1}} 
(\log X)^n}{Y^{(1-\eta)(E-6)}} \leq \frac{X^{n-d} Y^{n-1} X^{ \ve (d-1)n/2^{d-1}} 
(\log X)^n}{X^{(1-\ve) (1-\eta)(E-6)}} \] gives a power saving 
as soon as $(1-\ve) (1-\eta)(E-6) > \ve (d-1)n/ 2^{d-1}.$
Recalling that  $E = n / (2^{d-2} (d-1))$ and multiplying both sides by $2^{d-1} (d-1)$, this condition becomes
 \[(1-\ve) (1- \eta) ( 2n - 3 (d-1) 2^d ) > \ve n (d-1)^2\] or 
 \[ \ve <  \frac{ 2n   - 3 (d-1)  2^d } {  n  (d-1)^2/ (1-\eta) +2n- 3 (d-1)  2^d}. \] 
 Thus  we may take $c_{d,n} = \frac{ 2n   - 3 (d-1)  2^d } {  n  (d^2-2d+3) - 3 (d-1)  2^d} $ in Theorem \ref{t:P} by letting $\eta$ converge to $0$.  Note that for fixed $d$ we have 
 $c_{d,n}\to \frac{2}{d^2-2d+3}$ as $n\to \infty$.

\end{document}